\documentclass[11pt,oneside]{amsart}

\usepackage{bbm}
\usepackage{mathrsfs}
\usepackage{soul}
%\usepackage[sort]{natbib}

%% Language and font encodings
\usepackage[english]{babel}
\usepackage[utf8]{inputenc}
\usepackage[T1]{fontenc}
%\usepackage{wrapfig}

%% Sets page size and margins
\usepackage[top=3cm,bottom=2cm,left=3cm,right=3cm,marginparwidth=1.75cm]{geometry}
\setlength {\marginparwidth }{2cm}

%% Useful packages
\usepackage{physics}
%\mqty[\dmat{1,2,3,4}]) and...? 
\usepackage{amsmath,amssymb}
%%%% for numbering only the equations that we refer to and at the same time for numbering them according to the section they belong to
\usepackage{mathtools} 
\mathtoolsset{%showonlyrefs,
showmanualtags}
\usepackage{esint} %per usare \fint
\usepackage{graphicx}
\usepackage[dvipsnames]{xcolor}
\usepackage[colorlinks=true,allcolors=BlueViolet, pagebackref=true]{hyperref}
\renewcommand*\backref[1]{\ifx#1\relax \else (Cited on p.#1) \fi}
\hypersetup{hypertexnames=false}
\usepackage{cleveref}%always after hyperref
\usepackage[shortlabels]{enumitem}

\usepackage{dsfont} % matrice identità con \1

\usepackage{xspace} 
\usepackage{cancel}
\usepackage{fancyvrb}

%\usepackage[scr=rsfso]{mathalpha} %lettere minuscole belle https://tex.stackexchange.com/questions/575481/like-ell-but-for-j

%frak= Select the fraktur alphabet from euler, lucida, mathpi, mma, mt, esstix, boondox, pxtx, stixtwo.
%sostituisce \mathfrak
%%%%%%%%%%%
%comando\mathbdx{} per scrivere col font BOONDOX, molto calligrafico
%
\DeclareFontFamily{U}{BOONDOX-calo}{\skewchar\font=45 }
\DeclareFontShape{U}{BOONDOX-calo}{m}{n}{
  <-> s*[1.05] BOONDOX-r-calo}{}
\DeclareFontShape{U}{BOONDOX-calo}{b}{n}{
  <-> s*[1.05] BOONDOX-b-calo}{}
\DeclareMathAlphabet{\mathbdx}{U}{BOONDOX-calo}{m}{n}
\SetMathAlphabet{\mathbdx}{bold}{U}{BOONDOX-calo}{b}{n}
\DeclareMathAlphabet{\mathbbdx}{U}{BOONDOX-calo}{b}{n}
%%%%%%%%%%%%%%%%%%%%

%\normalem

\DeclareMathOperator{\arcsinh}{arcsinh}

%%Commands

\newtheorem{theorem}{Theorem}[section]
\newtheorem{proposition}[theorem]{Proposition}

\newtheorem{lemma}[theorem]{Lemma}

\newtheorem{definition}[theorem]{Definition}

%Per richiamare i teoremi con lo stesso numero

\newenvironment{manualtheorem}[1]{%
  \IfBlankTF{#1}
    {}
    {}%
  \manualtheoreminner
}{\endmanualtheoreminner}
  
% Per numerare i teoremi con le lettere:

%
%(https://tex.stackexchange.com/questions/1852/how-to-enumerate-theorems-using-letters) If your counter has the name theorem, the corresponding output command has the name \thetheorem. The command \Alph takes a counter as argument and returns the value converted into the corresponding capital letter. \alph would return lowercase letters instead.
%Quanto è meta?

\theoremstyle{remark} 

\newtheorem{remark}[theorem]{Remark}

% CLEVEREF
\crefname{equation}{Equation}{Equations}
\crefname{gather}{Equation}{Equations}
\crefname{multline}{Equation}{Equations}
\crefname{figure}{Figure}{Figures}
\crefname{question}{Question}{Question}
\crefname{section}{Section}{Sections}
\crefname{subsection}{Subsection}{Subsections}
\crefname{appendix}{Appendix}{Appendices}
\crefname{lemma}{Lemma}{Lemmas}
\crefname{proposition}{Proposition}{Propositions}
\crefname{theorem}{Theorem}{Theorems}
\crefname{innercustomthm}{Theorem}{Theorems}
\crefname{mainthm}{Theorem}{Theorems}
\crefname{corollary}{Corollary}{Corollaries}
\crefname{definition}{Definition}{Definitions}
\crefname{remark}{Remark}{Remarks}
\crefname{proposition}{Proposition}{Proposition}
\crefname{corollary}{Corollary}{Corollaries}
\crefname{example}{Example}{Examples}
\crefname{claim}{Claim}{Claim}
\crefname{conjecture}{Conjecture}{Conjecture}
%\crefname{enumi}{}{}

%\Crefformat{appendix}{\S#2#1#3}
%%%%% for making comments  -- same colors of the program!
\definecolor{bluola}{RGB}{138,43,226}%blueviolet

%\Bbb is obsolete and has been renamed \mathbb. Now both works as the latter version
%%%%proposed shortcuts
\newcommand{\R}{\mathbb{R}}
\newcommand{\C}{\mathbb{C}}
\newcommand{\N}{\mathbb{N}}
\newcommand{\Z}{\mathbb{Z}}
\renewcommand{\P}{\mathbb{P}}
\newcommand{\E}{\mathbb{E}}
\newcommand{\F}{\mathcal{F}}

\newcommand{\de}{\partial}

\newcommand{\inter}[1]{%
  {\kern0pt#1}^{\mathrm{o}}%
}

\newcommand{\1}{\mathds{1}}
%%standard letters_v
\newcommand{\f}{\varphi}
\renewcommand{\a}{\alpha}
\renewcommand{\b}{\beta}

\newcommand{\h}{\theta}
\newcommand{\zero}{0}

%Riemannian

\newcommand{\vol}[1]{\mathrm{Vol}^{#1}}

%CHAOS
\newcommand{\Var}{{\mathbb{V}\mathrm{ar}}}
\newcommand{\Cov}{\mathrm{Cov}}
\newcommand{\coeff}{\Theta}

\newcommand{\lft}{\mathcal{L}_{f-t}}

%RRW

%ball and spheres

%vv \randin from MathiStec code
\def\randin{%
  \mathchoice%
    {\raisebox{-.35ex}{$\displaystyle{^\subset}$}\mkern-11.5mu\raisebox{+.45ex}{$\displaystyle{_\subset}$}}% \displaystyle
    {\mkern+1mu\raisebox{-.27ex}{$\textstyle{^\subset}$}\mkern-11.7mu\raisebox{+.45ex}{$\textstyle{_\subset}$}}% \textstyle
    {\raisebox{.35ex}{$\scriptstyle\subset$}\mkern-14mu\raisebox{-.15ex}{$\scriptstyle\subset$}}% \scriptstyle
    {\raisebox{.3ex}{$\scriptscriptstyle\subset$}\mkern-13.5mu\raisebox{-.10ex}{$\scriptscriptstyle\subset$}}% \scriptscriptstyle
}
%^^ \randin from MathiStec

%vv randto from MathiStec (c'é un % da togliere)

%^^ randto from MathiStec

%transversality

\makeatletter
\newcommand{\tpitchfork}{%
  \raise-0.1ex\vbox{
    \baselineskip\z@skip
    \lineskip-.52ex
    \lineskiplimit\maxdimen
    \m@th
    \ialign{##\crcr\hidewidth\smash{$-$}\hidewidth\crcr$\pitchfork$\crcr}
  }%
}

\newcommand{\mC}{\mathcal{C}}
\newcommand{\m}[1]{\mathcal{#1}}
%   Equations
\newcommand{\be}{\begin{equation}}%\stepcounter{equation}}
\newcommand{\ee}{\end{equation}}
\numberwithin{equation}{section}
%\numberwithin{gather}{section}

\newcommand{\bega}{\begin{equation}\begin{aligned}}
\newcommand{\eega}{\end{aligned}\end{equation}}

\newcommand{\begt}{\begin{equation}\begin{gathered}}
\newcommand{\eegt}{\end{gathered}\end{equation}}
% graffe
\newcommand{\kop}{\left\{}
\newcommand{\pok}{\right\}}
%tonde
\newcommand{\tyu}{\left(}
\newcommand{\uyt}{\right)}
%quadre
\newcommand{\qwe}{\left[}
\newcommand{\ewq}{\right]}
\DeclareMathOperator{\Ho}{Ho}

\newcommand{\dist}[2]{\mathrm{dist}\!\tyu #1 , #2\uyt}

\newcommand{\spin}[1]{\m{T}^{\otimes {#1}}}
\newcommand{\Lag}{\Sigma}%Notation for the multiple of Laguerre polynomials obtained as average of Hermite
\newcommand{\immondo}{\kappa}
%%%

% \usepackage{showkeys}

\title[Level area of spin random fields]{Level area of spin random fields: a chaos decomposition}
\author{Francesca Pistolato, Michele Stecconi}
\date{\today}
%%%
\begin{document}

\begin{abstract}
We study real left-invariant spin Gaussian fields on $SO(3)$, a special class of non-isotropic random fields used to model the polarization of the Cosmic Microwave Background.
Leveraging recent results from \emph{New chaos decomposition of Gaussian nodal volumes} \cite{cgv2025StecconiTodino}, we provide an explicit formula for the Wiener-It\^o chaos decomposition of the area measure of level sets of such random fields. Our analysis represents a step forward in the study of second-order asymptotic properties of the Lipschitz-Killing curvatures of excursion sets of spin random fields. Remarkably, our formulas reveal a clear difference between the high frequency regime and the zero spin case. 
\keywords{Spin, 
Cosmic Microwave Background polarization,
random fields on manifolds,
Wiener-It\^o chaos,
Lipschitz-Killing Curvatures,
spin weighted spherical harmonics, 
nodal volume.} 
\end{abstract}

\maketitle 

\section{Introduction} 
Since its discovery in 1965 \cite{1965_penzias_wilson}, cosmologists have shown increasing interest in the Cosmic Microwave Background (CMB), a primordial {radiation field} %wave %believed to contain 
carrying significant information about the early stages of the universe. In particular, it is one of the main pieces of evidence supporting the current cosmological model \cite{AHeavens_2008}; more and more precise constraints on cosmological parameters can be extracted from the observed angular variation of its temperature (or intensity) correlations, and polarization, see \cite{1992_cobe,2013_wmap,Planck20}. We refer to \cite[Chapter 29]{2022_reviewCMB} for a complete yet concise review of CMB literature and perspectives. 

The latest research direction %in this field (visto che "field" è anche la CMB)
{on this topic} concerns the study of CMB polarization \cite{geller2008spin,Cabella:2004mk,Seljak}. According to inflation theory, rapid expansion in the early universe generated primordial gravitational waves, which would {imprint} a distinctive swirling {polarization pattern on the CMB radiation} (B-mode) {superposed on the simpler scalar fluctuations (E-mode)}. Precise measurements of {the} CMB polarization would then help in differentiating between competing inflationary models, %constrain the energy scale of inflation 
{determine the energy level at which inflation occurred} and provide insights into the fundamental mechanisms that drove the universe's initial expansion. The forthcoming space mission LiteBIRD is designed to map the polarization of CMB across the entire sky with unprecedented sensitivity, see \cite{LiteBIRD_Collaboration2023-jy,CMB2023arXiv231200717C}. 

A rich direction of research aims to detect deviations from Gaussianity and isotropy in CMB polarization data. To this end, it is of outmost importance to devise statistics sensitive to these features. Whilst the study of the angular power spectrum has led to tightest constraints in the values of cosmological parameters \cite{Planck20}, it %can't
{does not allow to} differentiate between Gaussian and non-Gaussian models, %as well as being 
{and it is} blind to possible anisotropies. Complementary tools in this analysis are the Lipschitz-Killing curvatures, better known as Minkowski functionals (MFs) in the physics literature \cite{Schmalzing1997}, of the excursion set. They are a set of descriptors for the {local} geometry and topology of such fields (in dimension $3$, they are: volume, surface area, total mean curvature, and Euler characteristic) and prove to be sensitive to higher-order correlations. 

In \cite{articolo_dei_fisici_published}, the study of Minkowski functionals has been extended to non-scalar fields such as the CMB polarization, %which is 
modeled as a spin-$2$ field on the $2$-sphere, by relating such models to a special kind of real-valued scalar fields on the orthogonal group $SO(3)$, {see also \cite{BR13}}. In \cite{articolo_dei_fisici_published}, simulations prove that (conjectured yet probed) theoretical predictors are fully %compatible 
{consistent} with the MFs computed on CMB Gaussian isotropic maps with a realistic angular power
spectrum, simulated according to \cite{Planck2020_V}. Indeed, they find no systematic deviations in the residuals, as well as a low statistical variation of the curves, which decreases when increasing the resolution ({high frequency} limit). In \cite{elk2024PistolatoStecconi}, the authors have confirmed the conjectures and derived the exact formulas of the predictors. 

In the present work, we move a small step forward addressing the study of second-order results for such functionals. Indeed, the chaos decomposition proved to be a very efficient tool to study the variance and further fluctuations of nodal volume, and other geometric functionals, of Gaussian fields; the following list of representative references is by no means complete: \cite{MRossiWigman2020,MPRW,kratzleon,Cammarota2017NodalAD,Not23,Smutek,NourdinPeccatiRossi2019,maini2025,MAINI2024,letendre_varvolII_2019,VidLipschitz,2021_3dBerry_Dalmao_Estrad_Leon,BM19,marinucci2023laguerre,Gass2025}, see also the surveys \cite{rossisurvey,Wigman2022}. 
All past works concern isotropic settings, where the field is \emph{homothetic}, but this is not the case for spin fields, as shown in \cite{stecconi2021isotropic}. We discuss this point more in detail in \cref{sec:nohomo} below.

Our main contribution is a chaos expansion for the area of the level set of a spin-$2$ field, in the formalism used by \cite{articolo_dei_fisici_published}, that is the second Lispchitz-Killing curvature of the excursion set, measured with respect to the standard geometry of $SO(3)$. 
Furthermore, we present our results within a unified framework for both the level area of spin ($s\in\Z$) random fields, and boundary length of random spherical ($s=0$) harmonics. This allows us to directly compare the two decompositions and distinguish between the two cases ($s\neq 0$ and $s=0$) by examining higher-order terms in the {chaos} expansion in the high frequency regime. This is a novelty, as no distinction can be detected at first order (see \cite{elk2024PistolatoStecconi}), or looking at the scaling limit (see \cite{geospin2022LMRStec}). 
\subsection{Notations}\label{sec:notations}

\begin{enumerate}[(i)]%, wide]

\item {For the rest of the paper, and unless otherwise specified, every random element is assumed to be defined on an adequate common probability space $(\Omega,\mathcal F, \P)$, with $\E$ denoting the expectation with respect to $\P$.}

\item A \emph{random element} (see \cite{Billingsley}) of the topological space $T$ (or \emph{with values} in $T$) is a measurable mapping $X\colon \Omega\to T$, defined on a probability space $\tyu \Omega,\mathscr{E},\P \uyt$. In this case, we write
    \be\label{eq:randin}
    X\randin T .
    \ee 

\item We will denote the $n$-volume of the %round 
unit sphere $S^{n}=\kop x \in \R^{n+1}\colon |x|=1\pok$ as $s_{n}=\frac{2\pi^{\frac{n+1}2}}{\Gamma\tyu\frac{n+1}{2}\uyt}$.

\end{enumerate}

\subsection{Plan of the paper} 
% The structure of the paper is as follows. 
In \cref{sec:setting} we present our object of interest, \cref{eq:length}, and state our two main contributions: \cref{thm:3} and \cref{prop:s0}, the latter allowing for a comparison with analogous results in the spherical case. In \cref{sec:propf} and \cref{sec:propgf} we present known and new properties of the field and its induced geometry. \cref{sec:chaos} is devoted to preliminaries and proofs. To conclude, in \cref{appendix} we present some results on Gaussian random variables that we believe to be of independent interest.

\section{Setting and main result} \label{sec:setting}

\subsection{The field}
We consider the family of Gaussian random fields $f$ on $SO(3)$, defined as in \cite[Eq. (1.2)]{elk2024PistolatoStecconi}:
\begin{equation}\label{eq:f}
f=\Re(X),\quad \text{where} \quad X= \sum_{l=|s|}^{\infty}c_l\sum_{m=-l}^l \gamma^l_{m,s} D^l_{m,s},
\end{equation}
where $s\in \Z$, $l,m\in \N$; $\gamma^l_{m,s}$ {being standard symmetric complex Gaussian variable $\zeta \sim \mathcal N_{\mathbb C}(0,1)$ (meaning $\Re \gamma ^l_{ms},\Im \gamma ^l_{ms}\sim \m N(0,\frac{1}{2})$ are independent), see also \cite[Proposition 1.31]{janson}}; the deterministic functions $D^l_{m,s}:SO(3)\to \C$ are the coefficients of Wigner matrices (see \cite[Section 3.3]{libro}), and the numbers $c_l>0$ are real positive constants normalized so that the field has constant variance one, that is:
\be \label{eq:normvar}
1=\Var\kop f(P)\pok =\sum_{l=|s|}^\infty 
\frac{c_l^2}{2}, %=\frac{k(0)}{2}
\ee
for all $p\in M$. In addition, we will assume that the $c_l$ are such that the series in \cref{eq:f} converges in the $\mC^\infty$ sense\footnote{Actually, $\mC^1$ would be enough for most, if not all, of our purposes.}. For instance, if $c_l=0$ for all but a finite subset of indices $l\in \N$, then $f$ is automatically smooth, given that $D_{m,s}^l\in \mC^\infty(SO(3),\C)$ for all admissible choices of $l,m,s$.
% \subsubsection{An important parameter.}
A key parameter in our study is the number 
\be 
\xi^2:=\sum_{l=|s|}^\infty 
\frac{c_l^2}2\frac{(l(l+1)-s^2)}{2}>0%=-\frac{k''(0)}{2}.
\ee
{that represents the \emph{frequency} of $f$, and }whose meaning will be explained below, see \cref{sec:nohomo}.
{The two parameters $\xi$ and $s$, the latter being called the \emph{spin} of $f$, fully characterize the % second order Taylor expansion of the covariance function of the field, see \cite[Remark 10]{elk2024PistolatoStecconi}, hence its induced 
metric induced by the field, see \cref{eq:metric}.}

\subsubsection{Spin-s fields.}
The Gaussian field defined in \cref{eq:f} is a left-invariant \emph{real spin-$s$ function} on $SO(3)$. %, and the parameter $s$ is called the \emph{spin} of the field. 
Its characterizing properties are recalled in \cref{sec:propf}. It is the same model considered in \cite{elk2024PistolatoStecconi,articolo_dei_fisici_published}. Its complex counterpart $X$ has been more widely studied, see e.g.  \cite{stecconi2021isotropic,geospin2022LMRStec,BR13,libro,geller2008spin}, where it has been equivalently defined as a Gaussian isotropic \emph{spin-weighted function on $S^2$}, that is, a section $\sigma$ of the spin-s bundle $\spin{s}\to S^2$ (see the \emph{pull-back correspondence} in \cite{stecconi2021isotropic,geospin2022LMRStec,BR13}). It's important to remark that $f$ and $X$ are ---as processes--- fully correlated. Therefore, despite being defined on different spaces, we can say that $f$, $X$ and $\sigma$ carry the same statistical information. 

\subsection{The reference geometry}\label{subsec:refgeo}
In this paper, we consider the orthogonal group $SO(3)$ as a Riemannian manifold $(M,g)$, where the metric $g$ is the one induced by the embedding
\be 
\iota: SO(3)\hookrightarrow \R^6, \quad P\mapsto (Pe_2, Pe_3),
\ee
which sends an orthogonal matrix to the vector formed by its last two columns\footnote{Taking the fist two columns instead than the last would yield the same metric. This choice will come useful later, for notational reasons.}. We remark that $\iota$ is an isometry. 
Moreover, we define the projection onto the sphere 
\be 
\pi\colon SO(3)\to S^2, \quad P\mapsto Pe_3,
\ee
which is a Riemannian submersion, with fibers being circles of length $2\pi$, see \cite{stecconi2021isotropic}. {We refer to the monograph \cite{leeriemann}, for definitions.} {In this work, we study subsets $B\subset SO(3)$ that are union of fibers of $\pi$, i.e. $B = \pi^{-1}(\pi(B))$.} This is a natural choice, given that $f$ is associated to a spin-weighted function $\sigma$ on $S^2$.

\subsection{Main results}
It is immediate to see that a field $f$ constructed as in \cref{eq:f} and satisfying \cref{eq:normvar} always falls precisely in the setting of \cite{cgv2025StecconiTodino}, in which the boundary volume measure (area, in our case)
\be \label{eq:length}
\lft(B):=\mathrm{Area}(f^{-1}(t)\cap B), \quad \forall B\subset SO(3) \ \text{measurable,}
\ee
is studied through its Wiener-\^{I}to chaos decomposition, see \cref{subsec:prelim} for needed preliminaries. Here, the Area in $SO(3)$ stands for the $2$-dimensional Hausdorff measure associated to the metric $g$. The purpose of this paper is to specify the general formula given in \cite{cgv2025StecconiTodino} to the case of a real spin-s field $f$ as in \eqref{eq:f}. Indeed, one of the main novelties of such formula (see \cref{eq:nodalchaosAT}) is that it applies to non-isotropic or, more precisely, non-homothetic, Gaussian fields on a smooth manifold and $f$ is of such kind, see \cref{sec:invprop} below. 

Our main result is a chaos decomposition (of the push-forward via $\pi$) of the boundary area at level $t$ of spin-$s$ field as in \eqref{eq:f}:
\begin{equation}\label{eq:caosdec}
    \lft (\pi^{-1}(D) )= \E[\lft(\pi^{-1}(D))] + \sum_{q=1}^{\infty}  \lft[2q](\pi^{-1}(D)),
\end{equation}
where the series converges in $L^2(\P)$, and all terms are pairwise uncorrelated and belong to different Wiener-\^{I}to chaos spaces. More precisely, \cref{eq:caosdec} is understood as a decomposition of the random measure $\lft\pi^{-1}: D \mapsto \lft(\pi^{-1}(D))$ defined on Borel sets $D\subset S^2$. Moreover, the mapping $D\mapsto \lft[2q](\pi^{-1}(D))$ defines an (absolutely continuous) random measure on $S^2$ for each $q$, as shown in \cite{cgv2025StecconiTodino}. 

The expectation measure $\E[\lft]=\lft[0]$ was already studied in \cite[Theorem 1.1]{elk2024PistolatoStecconi} (see also \cref{eq:expectation} below); here, we extend such result to all $q\ge 1$. 
\cref{eq:expectation} (\cite{elk2024PistolatoStecconi}) shows, in particular, that the expectation is of order $\xi$. For this reason, we present our formulas rescaling them by $\xi$.
\begin{theorem}\label{thm:3}
If $s\neq 0$, then for all $q\in\N$ we have
\begin{multline}
     \frac{\m L_{f-t}(\pi^{-1}(D))}{\xi}[q]
=  \sum_{\substack{a,b\in \N,a+b=\!\frac q2
\\ }}  e^{-\frac{t^2}{2}}
\immondo_t\tyu a,b,  {\frac{s^2}{\xi^2}}\uyt  \\ \times
\int_{D}  
\Lag_{a}(\|f\|_{\spin{s}_x}^2)
\Lag_b\tyu \frac{\|\nabla^H_xf\|^2}{\xi^2}\uyt
\dd x,
\end{multline}
\end{theorem}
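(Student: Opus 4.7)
The starting point is the general chaos decomposition formula \eqref{eq:nodalchaosAT} of \cite{cgv2025StecconiTodino}, which applies to our field $f$ on $M=SO(3)$ since $f$ is a smooth Gaussian field of constant unit variance whose one-point covariance of $(f(P),\nabla f(P))$ is easily verified to be non-degenerate. That formula writes the $q$-th chaos component of the random measure $\mathcal{L}_{f-t}$ as an integral over $M$ of an explicit polynomial in $(f(P),\nabla f(P))$ whose coefficients are determined by this one-point law. I would apply it to $B=\pi^{-1}(D)$ and then rewrite the integrand in coordinates adapted to the Riemannian submersion $\pi\colon SO(3)\to S^2$, so that the spin structure splits the Gaussian vector $(f(P),\nabla f(P))$ into two independent two-dimensional blocks that reproduce the factorization in the statement.

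The key step is the orthogonal decomposition $T_PSO(3)=H_P\oplus V_P$, where $V_P=\ker d_P\pi$ is one-dimensional. The defining equivariance of the spin-$s$ complex field $X$ under the fiber action (recalled in \cref{sec:propf}) gives $\partial_V X(P)=is\,X(P)$, whence $\partial_V f(P)=-s\,\Im X(P)$ and
\[
f(P)^2+s^{-2}\bigl(\partial_V f(P)\bigr)^2=|X(P)|^2=\|f\|_{\spin{s}_{\pi(P)}}^2.
\]
Consequently $(f(P),\,s^{-1}\partial_V f(P))$ is a standard isotropic Gaussian vector in $\R^2$, independent at fixed $P$ of $\nabla^Hf(P)$, which is itself isotropic in $H_P\cong T_{\pi(P)}S^2$ with variance $\xi^2$ per coordinate. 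Thus $s^2$ and $\xi^2$ emerge as the only intrinsic parameters of the joint one-point law.

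The kernel in \eqref{eq:nodalchaosAT} is invariant under both $SO(2)$-symmetries of these blocks, so its Hermite expansion selects only terms of even total degree in each block, and the even Hermite components of an isotropic $\R^2$-valued Gaussian are known to be (up to normalization) Laguerre polynomials of the squared radius. This yields the factors $\Lag_a(\|f\|_{\spin{s}_x}^2)$ and $\Lag_b(\|\nabla^Hf\|^2/\xi^2)$ together with the bigrading constraint $a+b=q/2$ (which forces odd chaoses to vanish), and identifies $\immondo_t(a,b,s^2/\xi^2)$ with the remaining Gaussian expectation of the rescaled kernel, carrying the factor $e^{-t^2/2}$ coming from the Gaussian density at level $t$. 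Since $\|f\|_{\spin{s}_x}^2$ and $\|\nabla^Hf\|^2$ are constant along each fiber $\pi^{-1}(x)$ by left invariance, the fiber integration reduces $\int_{\pi^{-1}(D)}$ to $\int_D$ and the factor $\xi$ on the left-hand side cancels the overall Kac-Rice scaling of the area density.

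The principal obstacle is the explicit computation of $\immondo_t(a,b,s^2/\xi^2)$: the integrand of \eqref{eq:nodalchaosAT} contains the anisotropic norm $|\nabla f|_g^2=(\partial_V f)^2+\|\nabla^H f\|^2$, which couples the vertical and horizontal blocks because their variances scale differently (by $s^2$ versus $\xi^2$). Decoupling requires polar changes of variable in each $2$D block, after which $\immondo_t$ reduces to a univariate Gaussian integral of the product of two Laguerre polynomials against the square root of a quadratic form, depending only on the dimensionless ratio $s^2/\xi^2$ by scaling invariance; standard Laguerre integral identities (moved to the appendix, as the statement announces) then produce the closed-form expression of $\immondo_t$.
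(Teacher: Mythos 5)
Your overall route coincides with the paper's: start from the general formula \cref{eq:nodalchaosAT} of \cite{cgv2025StecconiTodino}, split $T_PSO(3)$ into horizontal and vertical parts via the submersion $\pi$, use the spin property to identify $s^{-1}\partial_V f(P)$ with $\Im X(P)=f\tyu PR_3\tyu\tfrac{\pi}{2s}\uyt\uyt$, and reduce the fiber integral to an integral over $S^2$. The identification $f(P)^2+s^{-2}(\partial_Vf(P))^2=\|f\|^2_{\spin{s}_{\pi(P)}}$ and the independence and standardness of the two Gaussian blocks are exactly \cref{prop:inva}, \cref{prop:ortogonality} and \cref{prop:normaljet}.

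There is, however, a genuine gap at the central step. You assert that the kernel of \cref{eq:nodalchaosAT} is invariant under the $SO(2)$-symmetry of the block $(f(P),s^{-1}\partial_Vf(P))$, so that its even Hermite components collapse pointwise to Laguerre-type polynomials of $\|f\|^2_{\spin{s}_x}$. This is false at a fixed point $P$: the kernel contains the factor $H_{2a}(f(P))$, which singles out the first coordinate of that block, and the sphere integral $\widetilde H_{2b}(d_Pf)$, once expanded, contributes factors $H_{2i}\tyu s^{-1}\partial_Vf(P)\uyt$ singling out the second. The rotational invariance only appears \emph{after} integrating over the fiber $\pi^{-1}(x)$, because moving along the fiber rotates $X(P)$; one must then compute the projection constant relating $\int_{\pi^{-1}(x)}H_{2a}(f)H_{2i}(s^{-1}\partial_\psi f)\,\dd P$ to $\Lag_{a+i}\tyu\|f\|^2_{\spin{s}_x}\uyt$. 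This is the content of the paper's \cref{thm:2} and \cref{lem:cam}, it produces the denominator $_2F_1(a,i;a+i+\tfrac12;1)$ in $\immondo_t$, and it forces a reindexing (the $a,b$ of the final statement are $a+i$ and $j$, not the $a,b$ of \cref{eq:nodalchaosAT}) that your plan does not account for. Relatedly, your description of the anisotropic sphere integral as ``a univariate Gaussian integral of the product of two Laguerre polynomials'' handled by ``standard Laguerre integral identities'' is not what is needed: after the polar substitution $u=\sin\a\cdot h+\cos\a\cdot V(P)$ the coefficient is $\int_0^\pi\cos^{2i}\a\sin^{2j+1}\a\,\nu(\a)^{1-2b}\dd\a$, which is evaluated by Euler's integral representation of the hypergeometric function (\cref{lem:Iib}), not by a Gaussian or Laguerre identity. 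These two computations are precisely where the hypergeometric structure of $\immondo_t$ comes from, and the proposal as written does not supply them.
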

In the case $s=0$, the formula is slightly different, due to the fact that the the field $f$ is degenerate. Indeed, the area should actually be studied as a length. We discuss a comparison between the two formulas in \cref{sec:comments}.
\begin{theorem}\label{prop:s0}
    If $s=0$, then there exists an isotropic Gaussian field $\phi$ on $S^2$ such that $f=\phi\circ\pi$, and for all $q\in \N$ we have
    \begin{multline} 
\frac{\m L_{f-t}(\pi^{-1}(D))}{\xi}[q]=
 \sum_{a,b\in \N,a+b=\!\frac{q}2}
e^{-\frac{t^2}{2}}
\immondo_t(a,b,0)
\\
\times 
\int_D
 2\pi\cdot H_{2a}(\phi(x))
 \Sigma_b\tyu \frac{\|\nabla_x \phi\|^2}{\xi^2}\uyt 
 dx.
    \end{multline}
\end{theorem}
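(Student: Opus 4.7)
The plan is to reduce the $s=0$ case to the chaos decomposition of the nodal length of an isotropic scalar Gaussian field $\phi$ on $S^2$, and then multiply by the length $2\pi$ of the fibres of $\pi$. When $s=0$ the Wigner coefficients $D^l_{m,0}$ are, up to an $l$-dependent constant, the pull-backs via $\pi$ of the spherical harmonics $Y_l^m$, so the series defining $X$ in \cref{eq:f} descends to $S^2$; taking real parts gives an isotropic real Gaussian field $\phi$ with $f=\phi\circ\pi$, where isotropy follows from the left $SO(3)$-invariance of $f$ combined with the homogeneity of $S^2=SO(3)/SO(2)$. Since $f$ is constant along the fibres of the Riemannian submersion $\pi$, the level set $f^{-1}(t)\cap \pi^{-1}(D)$ equals $\pi^{-1}(\phi^{-1}(t)\cap D)$, and Fubini for Riemannian submersions yields
\[
\lft(\pi^{-1}(D)) = 2\pi\cdot \mathrm{Length}(\phi^{-1}(t)\cap D).
\]

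Next I would apply the master chaos decomposition of \cite{cgv2025StecconiTodino} to the isotropic scalar field $\phi$ on $S^2$. At each $x\in S^2$ the random variable $\phi(x)\sim\mathcal N(0,1)$ is independent of the gradient $\nabla_x\phi \in T_x S^2$, whose squared norm $\|\nabla_x\phi\|^2/\xi^2$ is (after the correct normalisation) a chi-squared variable with two degrees of freedom. Expanding the approximate delta of $\phi - t$ along the Hermite basis in the real variable $\phi(x)$ and the Jacobian $\|\nabla_x\phi\|$ along the Laguerre basis $\Sigma_b$ of the $\chi^2_2$-variable $\|\nabla_x\phi\|^2/\xi^2$, and grouping terms of chaos order $q = 2(a+b)$, produces exactly the bilinear structure $H_{2a}(\phi(x))\,\Sigma_b(\|\nabla_x\phi\|^2/\xi^2)$ inside the integral. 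The factor $e^{-t^2/2}$ comes from the Gaussian density at $t$, and the coefficient $\immondo_t(a,b,0)$ is then forced to coincide with the corresponding product in the scalar expansion, while the explicit $2\pi$ factor is inherited from the fibre length.

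The main technical point is to verify that the coefficient $\immondo_t(a,b,0)$ appearing in \cref{thm:3} extends continuously to the degenerate value $s/\xi=0$ and agrees with the scalar coefficient just described: \cref{thm:3} was stated under the assumption $s\neq 0$ and cannot be applied directly. In the spin-$s\neq 0$ case, $\|f\|_{\spin{s}_x}^2$ is a sum of two independent real squared Gaussians and appears through the Laguerre polynomials $\mathcal L_a(\|f\|^2)$, whereas in the $s=0$ reduction this internal $\chi^2_2$ structure collapses, and the Laguerre polynomials of the squared modulus must be replaced by the even Hermite polynomials $H_{2a}$ of the real field itself. Carrying out this degeneration carefully, via the explicit integral representation of $\immondo_t$ in terms of the Berry/H\"ormander kernel expansion used in \cite{cgv2025StecconiTodino}, and keeping track of the normalisations linking $\mathcal L_a$ and $H_{2a}$ in the degenerate Gaussian limit, is the delicate bookkeeping step; once performed it delivers exactly the formula in the statement.
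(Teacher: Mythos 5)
Your overall strategy coincides with the paper's: reduce to a scalar field $\phi$ on $S^2$ via $f=\phi\circ\pi$ and the fibre-length factor $2\pi$, apply the general chaos decomposition of \cite{cgv2025StecconiTodino} directly to $\phi$ on the two-dimensional sphere (rather than trying to degenerate \cref{thm:3}), and then identify the resulting coefficient with $\immondo_t(a,b,0)$. The reduction steps (constancy of $f$ along fibres, the Riemannian-submersion/coarea argument giving $\lft(\pi^{-1}(D))=2\pi\,\m L_{\phi-t}(D)$, isotropy of $\phi$ from left-invariance) are all fine and match the paper.

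The gap is in the last step, which the paper itself singles out as the actual content of the theorem: you state that one must check that the scalar coefficient agrees with $\immondo_t(a,b,0)$, but you do not perform the check, and the route you sketch for it does not match the structure of $\immondo_t$. There is no ``integral representation of $\immondo_t$ in terms of the Berry/H\"ormander kernel'' to degenerate, and no Laguerre-to-Hermite collapse occurs in this proof: the quantity $\Lag_a(\|f\|^2_{\spin{s}_x})$ never appears, because the formula \cref{eq:nodalchaosAT} is applied to $\phi$ from the start, not obtained as a limit of the spin-$s$ statement. The coefficient $\immondo_t$ is the explicit finite sum \cref{eq:immondo}; at third argument $0$ the factor $\tyu s^2/\xi^2\uyt^{i}$ kills every term except $i=0$, so the identity to verify reduces to the single equality
\be
\immondo_t(a,b,0)=\frac{H_{2a}(t)}{H_{2a}(0)}\,\frac{\coeff(a,b)}{s_2},
\ee
which follows from Gauss's evaluation \cref{eq:Fs0} of ${}_2F_1$ at $z=1$ (applicable to the $i=0$ term since the relevant parameter combination equals $\tfrac32>0$) together with elementary Gamma-function cancellations, noting also that ${}_2F_1(a,0;a+\tfrac12;1)=1$. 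Without this computation the proof is incomplete; with it, it is exactly the paper's proof.
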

The coefficient $\immondo_t$ is defined in \cref{eq:immondo}: it's a polynomial in $t$ and an analytic function of $\frac{s}{\xi}$ (defined in terms of the hypergeometric function). $H_{2a}$ is the Hermite polynomial of degree $2a$, see \eqref{eq:H}. $\Lag_a$ are defined in \cref{def:sigma} by averaging $H_{2a}$ in all directions; in particular, they are univariate polynomials that form an orthogonal family for the $\chi_2^2$ distributions, see \cref{prop:ortogchi}.
The remaining terms to explain are the arguments of such polynomials: $\|f\|_{\spin{s}}$ and $\|\nabla^H f\|$, which are introduced in details, respectively, in \cref{def:spinsnorm} and \cref{def:verHor}. Here, we just mention that they have a natural interpretation in terms of the section $\sigma$ of the spin-s bundle $\spin{s}$, associated to $f$, and the derivative of $f$ along spherical directions.

\subsection{Comments on the results}\label{sec:comments}

\subsubsection{On the originality of \cref{prop:s0}.}
The formula in \cref{prop:s0} is essentially equivalent (up to the constant factor $2\pi$) to the one given for the nodal length ($t=0$) of Gaussian spherical harmonics in the $2$-dimensional case of \cite[Theorem 3.4]{marinucci2023laguerre} (it corresponds to consider $s=0$ and $c_l=\delta_l^\ell c_\ell$ for some $\ell \in \N$ in  \cref{eq:f}). Indeed, one can recognize that $\Sigma_a$ is a constant multiple of the generalized Laguerre polynomials appearing in \cite[Theorem 3.4]{marinucci2023laguerre}, see \cite[Section 1.2.8]{cgv2025StecconiTodino} for a detailed comparison. That considered, the original content of \cref{prop:s0} resides mostly in the identification of the coefficient $\immondo_t(a,b,0)$ as the evaluation at $0$ of $\immondo_t(a,b,s^2/\xi^2)$ in  \cref{thm:3}, valid for general spin $s$.
\subsubsection{Non-homothetic setting.} \label{sec:nohomo}
The complexity of the above formula (the coefficient $\immondo$ involves hypergeometric functions) is due to the fact that the field $f$ might not be homothethic, that is, $s\neq \xi$. In fact, in our setting the reasonable intuition is that $\frac{s}{\xi}\to 0$ (see \cite{elk2024PistolatoStecconi}). The homothetic case $s=\xi$ can still be realized by some very special spin-$s$ fields and in such case, the formula simplifies according to 
\be 
\immondo_t(a,b,1)=\sum_{i=0}^a \frac{B\left(i + \frac{1}{2},\, \beta + 1\right)}{_2F_1(\a-i,i;\a+\frac12;1)} \binom{2(i+\beta)}{2i}\cdot \frac{\coeff(\a-i,\beta+i)}{s_3}\frac{H_{2\a-2i}(t)}{H_{2a-2i}(0)},
\ee
since $\prescript{}{2}{F}_1(a,b;c;0)=1$. The fact that $s\neq \xi$ implies that the Adler-Taylor metric of $f$ is not a multiple of the reference metric on $SO(3)$; this \emph{anisotropy}, prevents from applying more classical formulas. The results of this paper rely instead on the recent formula proved in \cite{cgv2025StecconiTodino}.

\subsubsection{Can one read $s$ from the high frequency limit?} \label{rem:asymptotics}

More relevant to our analysis is the behavior as $\frac{s}{\xi} \to 0$. This is particularly complicated to understand, since $\prescript{}{2}{F}_1(a,b;c;z)$ is singular at $|z|\to 1^-$. 

\begin{remark} 
A deep investigation is beyond the scope of this paper. Nevertheless, we believe it to be of great interest as it will entail understanding the asymptotics of these formulas as $\xi \to \infty$ (high frequency limit). 
\end{remark}

Nevertheless, we can already make some preliminary yet interesting observations.
The structure of the formula in \cref{thm:3}, for the \emph{Area} of the set $$\{P\in SO(3)\colon f(P)=t, \pi(P)\in D\}$$ is similar to that of the \emph{Length} of $\kop x\in D: \phi(x)=t\pok$ for a Gaussian field $\phi$ on $S^2$, that indeed corresponds to the case $s=0$, see \cref{prop:s0}.
\begin{remark}\label{rem:mark}
$\Sigma_a(\|f\|^2_{\spin{s}_x})$ in \cref{thm:3}, is replaced by $2\pi\cdot H_{2a}(\phi(x)) $  in \cref{prop:s0}.
\end{remark}
Indeed, on one hand the former is obtained by averaging $H_{2a}(f(P))$ over  $P\in \pi^{-1}(x)$ (see \cref{def:sigma}). On the other side, if $s=0$, the field $f$ turns out to be constant on the fibers of $\pi\colon SO(3)\to S^2$; thus, given that each fiber has length $2\pi$ and that $\pi$ is a Riemannian submersion, we can express the \emph{Area} of a level of $f$ as the \emph{Length} of the level of $\phi$ via the identity
\be 
\m L_{f-t}(\pi^{-1}(D))=2\pi\cdot \m L_{\phi-t}(D), \quad \text{ if $s=0$}.
\ee

The similarity between the decomposition of the area when $s\neq 0$, and that of the length, if $s=0$, could be intuitively explained by the fact that in the limit $\xi\to +\infty$ the field behaves as if $s=0$, given that the coefficient $\immondo_t$ depends on $\frac{s}{\xi}$. This intuition is again confirmed by the scaling limit proved in \cite{geospin2022LMRStec}, from which the authors deduce that in the regime ``$s$ fixed, $\xi\to +\infty$'' the leading order in the asymptotic behavior of the expectation of a large class of geometric functionals is not sensitive to the spin.
The same phenomenon emerges once more from the formulas for the expectation of $\lft(B)=2\m L_2(\{f\ge t\}\cap B)$, computed in \cite{elk2024PistolatoStecconi}. \begin{theorem}[\!\!{\cite[Theorem 1.1]{elk2024PistolatoStecconi}}]\label{thm:elk}
    \begin{equation} \label{eq:expectation}
    \E\qwe\frac{\lft\tyu \pi^{-1}(D)\uyt}{\xi}\ewq = 2
    \vol{}(D) \ e^{-t^2/2} \  \begin{cases}
          \frac{\arcsin \sqrt{1-\frac{s^2}{\xi^2}}}{\sqrt{1-\frac{s^2}{\xi^2}}} + \frac{|s|}{\xi} \hspace{1cm} & \text{if } \xi^2 > s^2,  \\
         2 & \text{if } \xi^2 = s^2,\\
         \frac{\arcsinh \sqrt{\frac{s^2}{\xi^2}-1}}{\sqrt{\frac{s^2}{\xi^2}-1}} + \frac{|s|}{\xi}  & \text{if } \xi^2 < s^2.
    \end{cases}
\end{equation}
\end{theorem}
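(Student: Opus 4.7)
The plan is to apply the Kac--Rice (coarea) formula to the smooth Gaussian field $f\colon SO(3)\to\R$, reducing $\E[\lft(\pi^{-1}(D))]$ to the computation of $\E[\|\nabla f(P)\|]$ at a single point. Kac--Rice gives
\begin{equation*}
\E\qwe\lft\tyu\pi^{-1}(D)\uyt\ewq=\int_{\pi^{-1}(D)}\E\qwe\|\nabla f(P)\|\mid f(P)=t\ewq p_{f(P)}(t)\,\dd P.
\end{equation*}
Since $\Var f\equiv 1$, the density factor is $(2\pi)^{-1/2}e^{-t^2/2}$ and differentiating this constant variance yields $\Cov(f,\nabla f)=0$, so the jointly Gaussian $f(P)$ and $\nabla f(P)$ are independent and the conditioning can be dropped. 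Left-invariance of $f$ makes the law of $\nabla f(P)$ constant in $P$, and $\pi$ being a Riemannian submersion with circular fibers of length $2\pi$ gives $\vol{}(\pi^{-1}(D))=2\pi\vol{}(D)$; hence the task reduces to evaluating the constant $\E[\|\nabla f\|]$.

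Next I would identify the covariance of $\nabla f(P)$ in an orthonormal frame $(\partial_V,\partial_{H_1},\partial_{H_2})$ of $T_PSO(3)$, with $\partial_V$ generating the vertical fiber of $\pi$. Writing $X=f+ig$, differentiating the spin-$s$ equivariance $X(PR_\theta)=e^{-is\theta}X(P)$ at $\theta=0$ yields $\partial_Vf=s\,g$. The symmetric complex Gaussian identities $\E[X^2]=0$ and $\E[|X|^2]=2$ force $f$ and $g$ to be independent standard real Gaussians at each point, whence $\Var(\partial_Vf)=s^2$. The Casimir eigenvalue $\Delta_{SO(3)}D^l_{m,s}=-l(l+1)D^l_{m,s}$ produces the trace identity $\E[\|\nabla f\|^2]=\sum_l(c_l^2/2)l(l+1)=s^2+2\xi^2$, while symmetry of $f$ under the vertical circle action forces the horizontal covariance to be a scalar multiple of the identity, with variance $\xi^2$ per direction, and uncorrelated with $\partial_Vf$. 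Thus $\nabla f(P)\sim\mathcal N(0,\mathrm{diag}(s^2,\xi^2,\xi^2))$.

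Finally I would compute $\E[\|\nabla f\|]=\xi\,\E[\sqrt{uZ_0^2+R^2}]$, where $u=s^2/\xi^2$, $Z_0\sim\mathcal N(0,1)$ and $R=\sqrt{Y_1^2+Y_2^2}$ is an independent Rayleigh variable. Passing to polar coordinates $(\sqrt{u}Z_0,R)=(\rho\cos\theta,\rho\sin\theta)$ with $\theta\in[0,\pi]$ and integrating the radial variable first reduces the expectation to
\begin{equation*}
\sqrt{\tfrac{2}{\pi}}\,u^{3/2}\int_{-1}^1\frac{\dd c}{\tyu u+(1-u)c^2\uyt^2}.
\end{equation*}
Elementary partial fractions evaluate this integral in closed form: for $u<1$ the antiderivative involves $\arctan$, which via $\arctan\sqrt{(1-u)/u}=\arcsin\sqrt{1-u}$ repackages as $\arcsin\sqrt{1-u}/\sqrt{1-u}+\sqrt{u}$; for $u>1$ it involves a logarithm that rewrites as $\arcsinh\sqrt{u-1}/\sqrt{u-1}+\sqrt{u}$; and for $u=1$ the gradient is already isotropic, so $\|\nabla f\|=\xi\chi_3$ has expectation $\xi\sqrt{8/\pi}$, producing the constant $2$ after normalization. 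Multiplying by the Kac--Rice prefactor $2\pi\vol{}(D)(2\pi)^{-1/2}e^{-t^2/2}$ and dividing by $\xi$ then yields the three cases of \cref{thm:elk}. The main obstacle I anticipate is the second step: establishing \emph{independence} (not merely vanishing covariance) of $\partial_Vf$ from the horizontal gradient, together with isotropy of the horizontal covariance; both follow by combining the symmetric complex Gaussian structure of $X$ with its equivariance under the vertical circle action, but require careful bookkeeping of how the spin-$s$ phase acts on field values versus how the ordinary rotation acts on tangent vectors.
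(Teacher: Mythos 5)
Your proposal is correct, but it takes a different route from the one this paper uses. Here the statement is imported verbatim from the earlier work \cite{elk2024PistolatoStecconi} and is only re-derived internally as the $q=0$ component of the chaos expansion: setting $a=b=0$ in \cref{eq:nodalchaosAT} (so $\coeff(0,0)=1$), the coefficient collapses to $\m I^0_0\tyu\frac{s^2}{\xi^2}\uyt=B(\tfrac12,1)\,{}_2F_1\tyu-\tfrac12,\tfrac12;\tfrac32;1-\tfrac{s^2}{\xi^2}\uyt$, whose classical closed form reproduces the $\arcsin/\arcsinh$ dichotomy. You instead give a self-contained Kac--Rice computation. The two arguments share the same essential input, namely the covariance structure $\nabla f\sim\m N(0,\mathrm{diag}(s^2,\xi^2,\xi^2))$ in an orthonormal frame adapted to the fibration, which in the paper is encoded in \cref{eq:metric}, \cref{prop:ortogonality} and \cref{prop:normaljet}; your elementary integral $\int_{-1}^1\tyu u+(1-u)c^2\uyt^{-2}\dd c$ is exactly the $i=b=0$ instance of \cref{lem:Iib} evaluated by hand rather than via Gauss's hypergeometric integral. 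I checked your radial reduction and the partial-fraction evaluation: they give $\E\|\nabla f\|=\xi\sqrt{2/\pi}\tyu\frac{|s|}{\xi}+\frac{\arcsin\sqrt{1-s^2/\xi^2}}{\sqrt{1-s^2/\xi^2}}\uyt$, and together with the prefactor $2\pi\vol{}(D)(2\pi)^{-1/2}e^{-t^2/2}$ this matches \cref{eq:expectation} in all three regimes (including the $\chi_3$ check at $s^2=\xi^2$). What your route buys is independence from the chaos machinery and from the external reference; what it costs is that the two points you flag yourself --- genuine independence of $\de_\psi f$ from $\nabla^H f$ and isotropy of the horizontal covariance --- must be established, and your appeal to ``symmetry under the vertical circle action'' is the least rigorous step: the right $R_3(\psi)$-action does not fix $P$, so one must combine the invariance in law with the fact that the differential of right multiplication rotates $\Ho(P)$ onto $\Ho(PR_3(\psi))$, exactly as in \cref{prop:samenorm}; alternatively one can simply read the block structure off the explicit Gram matrix \cref{eq:metric} at $\theta=\pi/2$. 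With that point made precise, your argument is complete.
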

The same result is recovered by \cref{thm:3} and \cref{prop:s0} with $q=0$, as the two terms mentioned in \cref{rem:mark} coincide for $a=0$.

Once again, the leading order in the limit $\xi\to +\infty$ of \cref{eq:expectation} is the same for all $s$. \cref{thm:3} and \cref{prop:s0} extend such phenomenon to all chaos coefficients
\be 
\immondo_t\tyu a,b,\frac{s}{\xi}\uyt \xrightarrow[\xi\to+\infty]{}\immondo_t\tyu a,b,0\uyt,
\ee
implying that such information is not sensitive to the spin, at the leading order. However, in \cref{thm:3} something new can be observed: the term $\Sigma_a(\|f\|^2_{\spin{s}_x})$, which can be deduced by the chaos components of order $q\ge 2$ in the asymptotic $\xi\to +\infty$, contains some information about the spin parameter $s$. 
For instance, by \cref{prop:ortogchi}, for all integers $s \neq 0$ we have that\footnote{The innermost inequality can be deduced from well known properties of the Gamma functions; with equality if $a=0$.}
\be 
\Var\kop \Sigma_{a}\tyu \|f\|_{\spin{s}}^2\uyt \pok= (2a)! 4\pi^{\frac 32}\frac{\Gamma\tyu a+\frac12\uyt}{a!} \le 
(2a)!4\pi^2 
=\Var\kop 2\pi \cdot H_{2a}(\phi(x))\pok.
\ee
\begin{remark} 
Such relation implies that the variance of higher order chaoses is sensitive to the spin: the asymptotic behavior as $\xi\to+\infty$  when $s\neq 0$ (e.g. $s=2$ for CMB applications), is different from that exhibited when $s=0$. The intuition maturated in past works \cite{geospin2022LMRStec,articolo_dei_fisici_published,elk2024PistolatoStecconi} is that it should be \emph{safe} to replace $s=0$ in an analysis of the high frequency limit regime. In conclusion of this work, we can say that this intuition is wrong! Despite such principle applies for the expectation, the spin, e.g. $s=0$ or $s=2$, changes the leading order of the variance.
\end{remark}

\subsubsection{Variance.}
In \cite[Lemma 4.1]{cgv2025StecconiTodino}, relying on the representation of $\Sigma_a$ in terms of Hermite polynomials, the authors include a formula to compute covariances of random variables of the form $\Sigma_a(\chi^2_2)$, by specializing diagram formulas, see  \cite[Lemma 5.2]{CARAMELLINO2024110239} and \cite[Section 4.3.1]{libro}.
This implies that the chaos decomposition established in \cref{thm:3} and \cref{prop:s0} can be exploited to efficiently compute the variance of the level volume.

\section{Further properties of spin fields} \label{sec:propf}
We will denote the covariance function of $f$ by
\be 
C(P,Q):=\E\kop f(P)f(Q)\pok.
\ee
Clearly, the normalization at \cref{eq:normvar} above means that $C(P,P)=1$, for all $P\in SO(3)$.
Finally, the Adler-Taylor metric of $f$ is the Riemannian metric $g^f$ defined as 
\be 
g^f_P=\E\kop d_Pf \otimes d_Pf\pok \colon T_PM\times T_PM \to \R.
\ee 
The comparison between $g$ and $g^f$ has a crucial role in \cite{cgv2025StecconiTodino}, as well as in \cite{elk2024PistolatoStecconi}. This latter paper already contains a detailed study of $g^f$ for a spin-s field $f$ constructed as in \cref{eq:f}, and we will rely heavily on that in what follows.

\subsection{Invariance properties}\label{sec:invprop}
The right action of $S^1=O(2)$ on $SO(3)$ has a special role in the study of spin fields, that is, for any $\psi \in \R$, we denote by 
\be 
R_3(\psi):=\begin{pmatrix}
    \cos \psi & -\sin \psi & 0 \\
    \sin \psi & \cos \psi & 0 \\
    0 & 0 & 1
\end{pmatrix}
\ee
the rotation of angle $\psi$ around the $e_3$ axis. Then, any realization of the field $X$ satisfies the (deterministic) identity: \begin{equation}
    X\tyu P R_3(\psi)\uyt=X(P)e^{-is\psi},
\end{equation}
which is the defining property of \emph{spin-$s$ functions} (i.e., functions on $SO(3)$ with (right) spin equal to $-s$, in the language of \cite{stecconi2021isotropic}). 
Consequently the field $f=\Re(X)$ inherits the following property:
\begin{proposition}
\be\label{eq:spinproperty} 
f\tyu P R_3(\psi) \uyt = \begin{cases} f(P)\cos\tyu s \psi\uyt + f\tyu P R_3\tyu\frac{\pi}{2 s}\uyt\uyt \sin\tyu s\psi \uyt, \quad &\text{if $s\neq 0$};
\\
f(P), \quad &\text{if $s= 0$}.
\end{cases}
\ee
and if $s\neq 0$, then: $f(P)=-f\tyu P R_3\tyu\frac{\pi}{ s}\uyt\uyt$ and $f\tyu P R_3\tyu\frac{\pi}{2 s}\uyt\uyt$ are independent.
\end{proposition}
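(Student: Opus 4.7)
The plan is to reduce the assertion for $f=\Re(X)$ to the defining complex identity $X(PR_3(\psi))=X(P)e^{-is\psi}$, together with circular symmetry of the complex Gaussian $X$.

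First, I would split $X=f+ig$ with $g:=\Im(X)$ and expand the complex relation. Taking real and imaginary parts of
\[
f(PR_3(\psi))+ig(PR_3(\psi)) = \bigl(f(P)+ig(P)\bigr)\bigl(\cos(s\psi)-i\sin(s\psi)\bigr)
\]
yields, on the real side,
\[
f(PR_3(\psi)) = f(P)\cos(s\psi) + g(P)\sin(s\psi).
\]
The $s=0$ case is then immediate. For $s\neq 0$, the key trick is to evaluate at $\psi=\pi/(2s)$: this collapses the right-hand side to $g(P)$, giving the pointwise identification
\[
g(P) = f\bigl(PR_3(\pi/(2s))\bigr),
\]
and substituting back produces the claimed formula. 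Setting $\psi=\pi/s$ in the same formula gives $\cos(\pi)=-1$, $\sin(\pi)=0$, hence $f(PR_3(\pi/s))=-f(P)$.

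For the independence statement, I would use that $f(P)$ and $f(PR_3(\pi/(2s)))=g(P)$ are components of a single jointly Gaussian vector (being linear combinations of the $\gamma^l_{m,s}$), so it suffices to verify that $\Cov(f(P),g(P))=0$; equivalently, that $\E[X(P)^2]=0$. The latter follows from the symmetric complex Gaussian assumption on $\gamma^l_{m,s}$: since $\E[(\gamma^l_{m,s})^2]=0$ and distinct Gaussians in the sum \eqref{eq:f} are independent (so also have vanishing pseudo-covariance), the bilinear form
\[
\E[X(P)^2] = \sum_{l,l',m,m'} c_l c_{l'}\,\E[\gamma^l_{m,s}\gamma^{l'}_{m',s}]\,D^l_{m,s}(P)D^{l'}_{m',s}(P)
\]
vanishes term by term. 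Hence $\Re(X(P))$ and $\Im(X(P))$ are uncorrelated jointly Gaussian variables, i.e. independent.

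The only conceptual obstacle is making sure the pseudo-covariance vanishes for all pairs, i.e. that $\E[\gamma^l_{m,s}\gamma^{l'}_{m',s}]=0$ even when $(l,m)=(l',m')$; this is exactly the content of $\gamma^l_{m,s}\sim\mathcal{N}_{\mathbb C}(0,1)$ having independent real and imaginary parts of equal variance $\tfrac12$, which is recalled in the setup. Everything else is algebraic manipulation of the complex spin identity.
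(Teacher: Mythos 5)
Your proof is correct and follows essentially the same route the paper takes (the paper treats the proposition as an immediate consequence of the identity $X(PR_3(\psi))=X(P)e^{-is\psi}$, with the identification $\Im X(P)=f\tyu PR_3\tyu\frac{\pi}{2s}\uyt\uyt$ obtained exactly by evaluating at $\psi=\frac{\pi}{2s}$, and independence coming from the symmetric complex Gaussian assumption on the $\gamma^l_{m,s}$). Your explicit verification that the pseudo-covariance $\E[X(P)^2]$ vanishes term by term is a correct and slightly more detailed account of what the paper leaves implicit.
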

In particular, it follows that $f$ is invariant in law under such action:
\be 
f\tyu (\cdot) R_3(\psi) \uyt \sim f(\cdot).
\ee
Moreover, the fields $f$ and $X$ are left-invariant in law: for all $R\in SO(3)$
\be 
f\tyu R (\cdot) \uyt \sim f(\cdot).
\ee
Both invariances have to be intended as the analogous invariance of the covariance function, meaning that 
\be \label{eq:covinva}
C(P,Q)=C\tyu \1, R_3(-\psi)P^{-1}Q R_3(\psi)\uyt ,
\ee
for all pairs $P,Q\in SO(3)$ and $\psi\in \R$. See \cite{geospin2022LMRStec,stecconi2021isotropic}. 
\begin{remark}
We will avoid using the word \emph{isotropic} since it leads to dangerous ambiguities, as explained in \cite{stecconi2021isotropic}. We stress the fact that $f$ is not invariant under all isometries of $SO(3)$, as it is not right-invariant. For the same reason it is not possible to express its covariance function as a a function of the distance: an expression like $C(P,Q)=C(\dist PQ)$ is never possible (not even for $s=0$, a degenerate case in view of \cref{eq:spinproperty}). Instead, one has to be content with \cref{eq:covinva}, which can be further reduced in terms of Euler angles as in \cite{geospin2022LMRStec,stecconi2021isotropic} and reported in \cite[Eq. (3.6)]{elk2024PistolatoStecconi}.
\end{remark}

\subsection{The norm of the spin section} 

Recall that $f(P)=\Re (X(P))$, and that the invariance property (\cref{prop:inva}) comes from the identity \cref{eq:spinproperty}
\be 
X(PR_3(\psi))=X(P) e^{-is\psi},     
\ee
valid for all $\psi\in \R$, $P\in M$. The previous identity implies that the (random) norm {\begin{equation}\label{eq:randomnorm}
    |X(P)|^2 = |\Re (X(P))|^2 + |\Im (X(P))|^2 = |f(P)|^2 + \left|f\left(PR_3\left(\frac{\pi}{2s}\right)\right)\right|^2
\end{equation}} 
only depends on $x=\pi(P)\in S^2$. Therefore, the following definition is well-posed.
\begin{definition}\label{def:spinsnorm}
Given $x\in S^2$, we define the spin-$s$ norm of $f$ at the point $x\in S^2$ as \begin{equation}
    \|f\|_{\spin{s}_x}:=|X(P)|
\end{equation} 
for any $P\in SO(3)$ such that $\pi(P) = P\cdot e_3=x$.
\end{definition}
The meaning of $\|f\|_{\spin{s}_x}$ is that if we identify $f=\Re(X)$ with the corresponding section $s_f$ of the spin-s line bundle $\spin{s}\to S^2$ (as in \cite{stecconi2021isotropic,geospin2022LMRStec}), then 
\be 
\|f\|_{\spin{s}_x}=|X(P)|=\|s_f(x)\|_{\spin{s}_x},
\ee
where $P\cdot e_3=x$.

\section{A deeper investigation of the geometry induced by a spin field} \label{sec:propgf}
\subsection{Remarks on the choice of reference geometry}
Recall \cref{subsec:refgeo}.
\begin{remark}
Notice that the image of the isometry $\iota\colon SO(3)\hookrightarrow \R^6$ is the total space of the unit tangent bundle of the sphere (see \cite{elk2024PistolatoStecconi}), that we denote by 
\be 
\m{T}=\kop (v,x)\in \R^3\times \R^3 : x\in S^2,\ v \in T_xS^2,\ \|v\|=1\pok
\ee 
as in \cite{stecconi2021isotropic} and \cite{geospin2022LMRStec}. 
Under such identification, the right action of $R_3(\psi)$ acts precisely by rotating the fibers in anti-clockwise fashion: {if $(v,x)=\iota(P)$, then}
\be \label{eq:fiberCircle}
{\iota \colon} PR_3(\psi)\mapsto (e^{-i \psi}v,x), \qquad {\forall \psi\in\R},
\ee
{where $e^{-i \psi}v$ denotes the canonical complex multiplication in the fibers of $\m T$, see \cite{geospin2022LMRStec}.}
\end{remark}
\begin{remark} 
The latter observation justifies the choice of such geometry in view of the alternative descriptions of spin fields as sections of $\spin{s}$, for which we refer to \cite{geospin2022LMRStec,stecconi2021isotropic}. Such point of view was introduced in \cite{geller2008spin} which consider spin fields functions as random sections of a complex line bundle on $S^2$. The relation between random sections of line bundles on $S^2$ and random functions on $SO(3)$ has been described in detail in the related literature, first in \cite{BR13}, and later in 
\cite{geospin2022LMRStec,stecconi2021isotropic}, where the line bundle of spin $s$ is identified as the $s^{th}$ complex tensor power of $\m{T}$, from which the notation $\spin{s}$. There is nothing deep about it, given that a well known topological fact is that all complex line bundles on $S^2$ are characterized up to isomorphisms by their Chern class, or equivalently their Euler characteristic, which is an integer $2s\in \Z$. As a consequence, the complete list of isomorphism classes of complex line bundles on $S^2$ is $\kop \spin{s}:s\in \frac12\Z \pok$, including also half-integer spin, but nothing more.
\end{remark}
\subsection{The geometry of the field}
\subsubsection{Euler coordinates.}
We use the parametrization of $SO(3)$ given by Euler angles, following the notations and conventions of \cite{libro,geospin2022LMRStec,stecconi2021isotropic,elk2024PistolatoStecconi}.
\begin{definition}\label{def:eulerangles}
For any $\f, \theta, \psi\in \R$, we denote $R(\f,\theta,\psi):=R_3(\f)R_2(\theta)R_3(\psi)$, where
\bega 
R_3(\f):=\begin{pmatrix}
\cos \f & -\sin \f & 0 \\
\sin \f & \cos \f & 0 \\
0 & 0 & 1
\end{pmatrix}, \quad R_2(\theta):=\begin{pmatrix}
\cos \theta & 0 & \sin \theta \\
0 & 1 & 0 \\
-\sin \theta & 0 & \cos \theta \\
\end{pmatrix}
\eega
In particular, $R(0,\frac \pi 2, 0)=:P_0$.
\end{definition}
Such function defines a diffeomorphism \be 
R^{-1}\colon U:=SO(3)\smallsetminus \kop R_2(\theta): \theta\in \R\pok\to (-\pi,\pi)\times (0,\pi)\times (-\pi,\pi)
\ee
which will be our preferred chart for $SO(3)$. Notice that its domain $U$ is a dense open subset of full measure, and that $P_0 \in U$. Note also that $\1\notin U$, which is the reason why we don't use such point as $P_0$.
\subsubsection{The Gram matrix.}
The Gram matrix of the Adler-Taylor metric
of $f$
at a point $P\in M$, in the Euler angles coordinates $P=R(\f,\theta,\psi)$, see \cref{def:eulerangles}, is given by the following formula proved in \cite{elk2024PistolatoStecconi}:
\begin{equation}\label{eq:metric}
 \Sigma_{(\xi,s)}(\theta) 
      =  \begin{pmatrix}
         \xi^2 \sin^2(\h)+s^2 \cos^2(\h) & \zero & s^2 \cos(\h) \\
         \zero & \xi^2 & \zero \\
         s^2 \cos(\h) & \zero & s^2
     \end{pmatrix}, \; \Sigma_{(\xi,s)}(\pi/2) 
      =  \begin{pmatrix}
         \xi^2  & \zero & \zero \\
         \zero & \xi^2 & \zero \\
         \zero & \zero & s^2
     \end{pmatrix}.
\end{equation}
 Moreover, the standard metric $g$ on $SO(3)$ has Gram matrix $\Sigma_{1,1}(\h)$. 
\subsubsection{Left-invariance of both geometries.}
 An important thing to remember is that both metrics $g$ and $g^f$ are left-invariant and invariant under right multiplication by $R_3(\psi)$, that is, for any $R\in SO(3)$ and any $\psi\in \R$, the map 
 \be 
\Phi: M\to M, \quad \Phi(P)=RPR_3(\psi),
 \ee
 is an isometry of both $(M,g)$ to itself and $(M,g^f)$ to itself. This means precisely that the differential \be 
 d_P\Phi: (T_PM,g_P^{i})\to (T_{\Phi(P)}M,g_{\Phi(P)}^{i})
 \ee
 is a linear isometry for both $g^i\in \{g,g^f\}$.

\subsubsection{Decomposition of the gradient.}

\begin{definition}\label{def:verHor}
    Let $V$ be the unit vector field on $M$ defined as
    \be 
V(P):=\frac{\dd }{\dd \psi}\Big|_{\psi=0} P R_3(\psi) \in T_PM.
    \ee
    For any $f\in \mC^\infty$, we will denote
    \be 
\de_\psi f(P):=\langle d_Pf, V(P)\rangle.
    \ee
    Let $\Ho(P):=V(P)^{\perp_g}\subset T_PM$ be the orthogonal complement. 
    We call multiples of $V$ \emph{vertical} and elements of $H$ \emph{horizontal} and use the notation $u_V$ and $u_H$ for the orthogonal projections of a generic tangent vector $u\in T_PM$. Moreover, we denote $\nabla^Hf:=(\nabla f)_H$ and call it the \emph{horizontal gradient of $f$}.
\end{definition}
\begin{remark}
    The image of the curve $\{\iota(PR_3(\psi)):\psi\in \R\}$ is a circle of radius $1$ in $\R^6$, see \eqref{eq:fiberCircle}, implying that $V(P)$ has length one.
\end{remark}

It follows that the gradient (with respect to the original metric $g$) of $f$ can be written as 
\be \label{eq:decNabla}
\nabla f=\nabla^H f+\de_\psi f \cdot V,
\ee
where the two terms in the sum are orthogonal with respect to both $g$ and $g^f$. 
\begin{remark}[Scope of the decomposition - how to read this section if you are scared of geometry]
{In the next section, we will need to integrate functions of the norm of $\nabla f$ (with respect to the metric induced by the field, $g^f$, see \cref{eq:nodalchaosAT}). The results that follow aim to express it in terms of the norm of $\nabla^H f$ (with respect to the original metric  $g$) and $\de_\psi f$. Decomposition \eqref{eq:decNabla} states that the random (Gaussian) gradient $\nabla_P f \randin T_PM$ admits unique projections onto $  \Ho(P)$ and $ V(P)\mathbb R$, and that the two spaces are orthogonal with respect to both $g$ and $g^f$. This implies that $ \nabla^H_P f \randin \Ho(P)$ and  $\de_\psi fV(P) \randin V(P)\mathbb R$ are independent Gaussian. A careful investigation of their norms, carried in the following, allows us to show that they are, as Gaussian vectors, standard with respect to the metric $g^f$, and, up to a constant, standard with respect to the original metric, see \cref{def:standGaussvec} and \cref{prop:normaljet}. Then, \cref{prop:samenorm} will allow us to carry out the integration fiber-wise, exploiting the results on the spherical chaos proved in the previous section, since the contribution to $\|\nabla_P f\|_{g^f}$ of $\|\nabla_P^H f\|$ is constant along the fiber. At the end of the section, we will remark that the contribution of $\de_\psi f$ fully depends on the field $f$, and its so-called \emph{spin-$s$ norm}, see \cref{def:spinsnorm}.}
\end{remark}

{The following result characterizes the derivative of the field along the fiber, $\de_\psi f$, in terms of the field itself.}
\begin{proposition}\label{prop:inva}
\be 
\frac{1}{s}\de_\psi f
=
f\tyu P R_3\tyu\frac{\pi}{2 s}\uyt\uyt.
\ee
\end{proposition}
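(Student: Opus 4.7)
The plan is to obtain the claimed identity as a direct consequence of the defining spin-$s$ transformation law \cref{eq:spinproperty}, by differentiating it along the fiber at $\psi = 0$.

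First I would recall the definition of $\partial_\psi f(P)$ from \cref{def:verHor}: it is the directional derivative of $f$ at $P$ along the vertical unit vector $V(P) = \frac{d}{d\psi}\big|_{\psi=0} PR_3(\psi)$. By the chain rule this equals
\[
\partial_\psi f(P) = \frac{d}{d\psi}\Big|_{\psi=0} f(PR_3(\psi)).
\]
Since $f$ is assumed smooth (cf.\ the convergence hypothesis following \cref{eq:f}), this derivative is well-defined and can be computed by differentiating any representation of $\psi \mapsto f(PR_3(\psi))$.

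Next I would substitute the explicit formula \cref{eq:spinproperty} (for $s \neq 0$):
\[
f(PR_3(\psi)) = f(P)\cos(s\psi) + f\!\left(PR_3\!\left(\tfrac{\pi}{2s}\right)\right)\sin(s\psi).
\]
Differentiating this identity (in $\psi$) and evaluating at $\psi=0$ gives
\[
\partial_\psi f(P) = -s\, f(P)\sin(0) + s\, f\!\left(PR_3\!\left(\tfrac{\pi}{2s}\right)\right)\cos(0) = s\, f\!\left(PR_3\!\left(\tfrac{\pi}{2s}\right)\right),
\]
from which the proposition follows by dividing both sides by $s$.

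There is essentially no hard part here: the identity is just the derivative at $0$ of the $\mathrm{SO}(2)$-equivariance relation, exploiting that the rotation $R_3(\psi)$ acts as multiplication by $e^{-is\psi}$ on $X$ (so the real and imaginary parts rotate into each other at angular speed $s$). The only mild subtlety to flag is that $f(PR_3(\pi/(2s)))$ is precisely the imaginary part $\Im X(P)$ (up to sign), so the statement is consistent with the interpretation of $|X(P)|^2 = f(P)^2 + f(PR_3(\pi/(2s)))^2$ from \eqref{eq:randomnorm}. This also shows that $\frac{1}{s}\partial_\psi f$ has the same law as $f$, consistently with the left-invariance and the spin symmetry of the field.
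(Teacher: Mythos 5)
Your proof is correct and follows essentially the same route as the paper's: both differentiate the fiber map $\psi \mapsto f(PR_3(\psi))$ at $\psi=0$ and use the spin-$s$ transformation law \cref{eq:spinproperty} to identify the result as $s\, f\bigl(PR_3\bigl(\tfrac{\pi}{2s}\bigr)\bigr)$. Your version simply spells out the differentiation of the cosine/sine expansion that the paper leaves implicit in the phrase ``an immediate application of the spin-$s$ property.''
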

\begin{proof} An immediate application of the spin-$s$ property yields \begin{equation}
    \frac{1}{s}\de_\psi f
=
\frac{1}{s}\frac{\de }{\de \psi}\Big|_{\psi=0}f(PR_3(\psi))=\langle d_Pf,\frac{1}{s}V(P)\rangle=f\tyu P R_3\tyu\frac{\pi}{2 s}\uyt\uyt.
\end{equation}
\end{proof}
The following proposition provides an intrinsic way to express the relation between the Gram matrices of $g$ and of $g^f$.
\begin{proposition}\label{prop:ortogonality}
$\Ho(P)$ and $V(P)$ are orthogonal with respect to $g^f$. Moreover, for any $u\in T_PM$ we have
\be 
\|u\|^2_{g^f}=\xi^2\|u_H\|^2+s^2\|u_V\|^2.
\ee
\end{proposition}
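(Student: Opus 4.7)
The plan is to exploit the left-$SO(3)$ and right-$R_3(\psi)$ invariance of both $g$ and $g^f$ (recorded in the paragraph ``Left-invariance of both geometries'' above) in order to reduce the whole statement to a single fiber-point computation at $P_0=R(0,\frac{\pi}{2},0)$. Once there, everything is made transparent by the Euler angle Gram-matrix formula \eqref{eq:metric}, which is diagonal at $\theta=\pi/2$.

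First I would observe that for any $P\in SO(3)$ there is an isometry $\Phi\colon M\to M$ of both $(M,g)$ and $(M,g^f)$ of the form $\Phi(Q)=RQ$ with $R=P\cdot P_0^{-1}$, so that $\Phi(P_0)=P$. The differential $d_{P_0}\Phi$ therefore maps $V(P_0)$ to $V(P)$, because $V$ is defined intrinsically via right multiplication by $R_3(\psi)$, and hence sends $\Ho(P_0)=V(P_0)^{\perp_g}$ onto $\Ho(P)=V(P)^{\perp_g}$. Moreover, $d_{P_0}\Phi$ preserves both $g$- and $g^f$-norms and inner products, so the statement at $P_0$ propagates to arbitrary $P$.

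At $P_0$, the Euler-angle coordinate vectors $(\partial_\f,\partial_\theta,\partial_\psi)$ form a basis of $T_{P_0}M$. A direct unwrapping of \cref{def:eulerangles} gives $V(P_0)=\partial_\psi$ at $P_0$, since $P_0R_3(\psi)=R(0,\pi/2,\psi)$. The Gram matrix of $g$ in this basis is $\Sigma_{(1,1)}(\pi/2)=I_3$, so $\Ho(P_0)=\mathrm{span}(\partial_\f,\partial_\theta)$ and $V(P_0)$ has $g$-length one; the orthogonal decomposition $u=u_H+u_V$ of a vector $u=a\partial_\f+b\partial_\theta+c\partial_\psi$ is thus $u_H=a\partial_\f+b\partial_\theta$, $u_V=c\partial_\psi$, with $\|u_H\|^2=a^2+b^2$ and $\|u_V\|^2=c^2$. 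Plugging these same coordinates into \eqref{eq:metric} at $\theta=\pi/2$ gives
\begin{equation*}
\|u\|^2_{g^f}=\xi^2 a^2+\xi^2 b^2+s^2c^2=\xi^2\|u_H\|^2+s^2\|u_V\|^2,
\end{equation*}
and, since the matrix $\Sigma_{(\xi,s)}(\pi/2)=\mathrm{diag}(\xi^2,\xi^2,s^2)$ is diagonal in the same basis, the mixed term $g^f(u_H,u_V)$ vanishes. Both assertions of the proposition are therefore established at $P_0$, and transported to every $P\in SO(3)$ by the isometry $\Phi$ above.

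There is no serious obstacle: the whole content of the proposition is essentially a reformulation of \eqref{eq:metric} after noting that the nontrivial $\theta$-dependent off-diagonal entry $s^2\cos\theta$ disappears at $\theta=\pi/2$, and that the left-invariance reduction lets us pick that $\theta$ freely. The only minor point that needs care is checking that the decomposition $T_PM=\Ho(P)\oplus\R V(P)$ is preserved by the isometry $\Phi$, which follows because $V$ is defined from the right $R_3$-action and $\Phi$ commutes with right multiplication.
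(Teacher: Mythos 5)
Your proof is correct and follows essentially the same route as the paper's: reduce to the point $P_0$ via the left-invariance of both $g$ and $g^f$, then read off both the orthogonality and the norm identity from the fact that the Gram matrices $\Sigma_{(1,1)}(\pi/2)$ and $\Sigma_{(\xi,s)}(\pi/2)$ in \eqref{eq:metric} are simultaneously diagonal in the Euler-angle basis. The paper states this in one line; you have simply spelled out the reduction and the fiber-point computation in full detail.
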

\begin{proof}
    It is sufficient to check it for $P_0$, see \cref{eq:metric}, and in such case it is obvious since the matrices $\Sigma$ are simultaneously diagonalizable. 
\end{proof}
\subsubsection{What do we like about this decomposition?}
The map $SO(3)\to S^2$ given by $P\mapsto P\cdot e_3$ corresponds, under $\iota$, to the canonical projection $\pi:\m T\to S^2$ of the unit tangent bundle. In particular, it defines a Riemannian circle bundle (it is a Riemannian submersion) over $S^2$ that is intrinsically related to spin-s functions (see \cite{stecconi2021isotropic}). Taking the differential we obtain 
a very clear picture explaining the meaning of the decomposition into horizontal and vertical. Let $\pi(P)=x$, then
\be 
D_P\pi\colon T_PM\to T_xS^2, \quad \ker(D_P\pi)=\R\cdot V(P), \quad \ker(D_P\pi)^\perp=\Ho(P),
\ee
implying that $D_P\pi$ restricts to an isometry between $\Ho(P)$ and $T_xS^2$. 
\subsubsection{The first jet.}\label{subsec:firstJet}
We can decompose the first jet $j^1_Pf=(f(P),\nabla_Pf)$ of $f$ as follows:
\bega 
j^1_Pf&=
\begin{pmatrix}
    f(P), & f\tyu PR_3\tyu \frac{\pi}{2s}\uyt\uyt \cdot sV(P) + \frac1{\xi}\nabla^H_Pf \cdot \xi
\end{pmatrix}
\\
&= \Lambda_P
\begin{pmatrix}
    f(P), & f\tyu PR_3\tyu \frac{\pi}{2s}\uyt\uyt, & \frac1{\xi}\nabla^H_Pf 
\end{pmatrix},
\eega
where $\Lambda_P\colon \R\times \R\times \Ho(P)\to \R\times T_PM$ is the linear change of basis such that
\bega 
(1,0,0)&\mapsto (1,0) 
\\
(0,1,0) &\mapsto (0, sV(P))
&
\\
(0,0,u_H)&\mapsto (0,u_H\cdot \xi).
\eega
\begin{remark}
  The operator  $\Lambda_P$ is essentially equivalent to the \emph{pointwise frequency endomorphism} defined in \cite{cgv2025StecconiTodino}.
\end{remark}
\begin{definition}
    We will write 
    \be 
\Lambda^*j_P^1 f:=\begin{pmatrix}
    f(P), & f\tyu PR_3\tyu \frac{\pi}{2s}\uyt\uyt, & \frac1{\xi}\nabla^H_Pf 
\end{pmatrix}.
    \ee
\end{definition}
What is good about such representation of the jet, and thus of the gradient is the following.
\begin{definition}\label{def:standGaussvec}
    Let $V$ be a vector space of rank $n$, endowed with a positive-definite scalar product $g=\langle\cdot \, ,\cdot\rangle$. Then, a random element $X\randin V$ is said to be a \emph{standard Gaussian vector with respect to} $g$ if there exists an orthonormal basis $\m B$ of $V$ such that ${[X]_{\m B}} \randin \R^n$ (the vector of the components of $V$ with respect to the basis $\m B$) is a (real) standard Gaussian vector. 
\end{definition}
\begin{remark}
    We remark that it is equivalent to ask that ${[X]_{\m B}} \randin \R^n$ is a (real) standard Gaussian vector for \emph{any} orthonormal basis of $V$.
\end{remark}
\begin{proposition}\label{prop:normaljet}
The random vector 
\be 
\Lambda^*j_P^1 f \ \randin \ \R\times \R \times \Ho(P)
\ee
is a standard Gaussian vector (when $\Ho(P)$ has the metric $g_P$).
\end{proposition}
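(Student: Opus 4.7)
The strategy is to verify Definition \ref{def:standGaussvec} directly by choosing a convenient orthonormal basis of $\R\times\R\times\Ho(P)$ and showing that the resulting coordinate vector is a standard Gaussian in $\R^4$. Since $SO(3)$ is three-dimensional and $V(P)$ is $g$-unit (by the remark after \cref{def:verHor}), $\Ho(P)$ has dimension two; I pick any $g_P$-orthonormal basis $\{e_1,e_2\}$ of $\Ho(P)$ and use the canonical basis on the two $\R$ factors. The coordinate vector is
\[
\bigl(f(P),\; f(PR_3(\pi/(2s))),\; \xi^{-1}\langle d_Pf,e_1\rangle,\; \xi^{-1}\langle d_Pf,e_2\rangle\bigr)\in \R^4,
\]
which is jointly centered Gaussian (each entry is a linear functional of the centered Gaussian field $f$), so it suffices to check that all four marginal variances equal $1$ and that all pairwise covariances vanish.

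For the variances, $\Var(f(P))=1$ is the normalization \eqref{eq:normvar}, and the same identity evaluated at $PR_3(\pi/(2s))$ handles the second entry. By \cref{prop:ortogonality}, $g^f_P|_{\Ho(P)} = \xi^2\,g_P|_{\Ho(P)}$; together with the definition of the Adler--Taylor metric this gives $\Var(\langle d_Pf, e_i\rangle) = g^f_P(e_i,e_i) = \xi^2$, and rescaling by $\xi^{-1}$ yields $1$.

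For the covariances, the pair $(f(P),\, f(PR_3(\pi/(2s))))$ is independent by the statement recorded right after \eqref{eq:spinproperty}. Independence of $f(P)$ from $\langle d_Pf,e_i\rangle$ is the standard consequence of constant variance: $\Cov(f(P),\langle d_Pf,u\rangle) = \tfrac12\,\de_u \Var(f) = 0$ for every $u\in T_PM$. Using \cref{prop:inva} to rewrite $f(PR_3(\pi/(2s))) = s^{-1}\langle d_Pf,V(P)\rangle$, the covariances with the horizontal entries become $s^{-1}g^f_P(V(P),e_i)$, which vanish by the $g^f$-orthogonality of $V(P)$ and $\Ho(P)$ in \cref{prop:ortogonality}; that same proposition gives $\Cov(\langle d_Pf,e_1\rangle,\langle d_Pf,e_2\rangle) = g^f_P(e_1,e_2) = \xi^2 g_P(e_1,e_2) = 0$.

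I do not anticipate a real obstacle: the statement is essentially a bookkeeping translation of results already established above, the only subtlety being to keep track of which metric is in play at each step. The change of basis $\Lambda$ is tailored precisely so that the $\xi^{-1}$ and $s^{-1}$ factors convert $g^f$-standardness of the directional derivatives into $g$-standardness, which is what is needed for the claim.
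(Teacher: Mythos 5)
Your proof is correct. The paper actually states \cref{prop:normaljet} without proof, treating it as an immediate consequence of the surrounding discussion (the constant-variance normalization \eqref{eq:normvar}, the independence of $f(P)$ and $f(PR_3(\pi/(2s)))$ noted after \eqref{eq:spinproperty}, \cref{prop:inva}, and \cref{prop:ortogonality}), and your coordinate-wise verification of unit variances and vanishing covariances is precisely the bookkeeping the authors leave implicit; the only tacit assumption, shared with the paper, is that $s\neq 0$ so that the second entry of $\Lambda^*j^1_Pf$ is defined.
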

\subsubsection{Spin-s invariance of the jet.}
Because of the spin-s property of $f$,  see \cref{eq:spinproperty}, we deduce the following almost sure invariance of the gradient norm.
\begin{proposition}\label{prop:samenorm}
The following identity holds \emph{surely}:
\be 
\|\nabla^H_{PR_3(\psi)}f \|=\|\nabla^H_{P}f \|,
\ee
for all $\psi\in \R$. In other words, $\|\nabla^H_{P}f \|$ descends to a function on $S^2$, which we will denote as $\|\nabla^H_{x}f \|:=\|\nabla^H_{P}f \|$ whenever $x = \pi(P)$.
\end{proposition}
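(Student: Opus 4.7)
The plan is to reduce the statement to a deterministic consequence of the spin-$s$ transformation law of the complex field $X=f+i\tilde f$, combined with the right-$R_3(\psi)$ invariance of the metric $g$ recalled in \cref{sec:invprop}, which makes $\Phi_\psi(P):=PR_3(\psi)$ a $g$-isometry of $M$.

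I would proceed in three steps. First, differentiate the sure identity $X\circ\Phi_\psi=e^{-is\psi}X$ (equivalent to \cref{eq:spinproperty}). Since $\Phi_\psi$ is a $g$-isometry one has the general formula $\nabla(X\circ\Phi_\psi)|_P=(d\Phi_\psi|_P)^{-1}\nabla X|_{\Phi_\psi(P)}$, which combined with the spin relation gives
$$\nabla X|_{PR_3(\psi)} \;=\; e^{-is\psi}\,d\Phi_\psi\bigl(\nabla X|_P\bigr).$$
Second, project to $\Ho$: since $\pi\circ\Phi_\psi=\pi$ (because $R_3(\psi)e_3=e_3$), $d\Phi_\psi$ sends $V(P)$ to $V(PR_3(\psi))$ and consequently $\Ho(P)$ to $\Ho(PR_3(\psi))$ isometrically, so the horizontal projection commutes with $d\Phi_\psi$ and yields
$$\nabla^H X|_{PR_3(\psi)} \;=\; e^{-is\psi}\,d\Phi_\psi\bigl(\nabla^H X|_P\bigr).$$
Taking $g$-norms, and using that $d\Phi_\psi|_{\Ho(P)}$ is a $g$-isometry while $|e^{-is\psi}|=1$, we deduce that the complex modulus $|\nabla^H X|^2=\|\nabla^H f\|^2+\|\nabla^H\tilde f\|^2$ is fiber-constant. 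Third, separate the real and imaginary contributions using \cref{prop:inva}, which identifies $\tilde f(P)$ with $f(PR_3(\pi/(2s)))$ and hence $\nabla^H\tilde f|_P$ with the image of $\nabla^H f|_{PR_3(\pi/(2s))}$ under the isometry $d\Phi_{\pi/(2s)}^{-1}$; this should deliver the sure identity for $\|\nabla^H f\|$ alone.

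I expect the main obstacle to be precisely this last step, namely extracting the $\psi$-invariance of $\|\nabla^H f\|^2$ on its own from the invariance of the total complex modulus. Concretely, the first two steps show that $\|\nabla^H f|_{PR_3(\psi)}\|^2$ is a trigonometric polynomial in $\psi$ with angular frequency $2s$ and constant term $\tfrac12|\nabla^H X|_P|^2$; to eliminate the oscillating $\cos(2s\psi)$ and $\sin(2s\psi)$ contributions one must derive the sure identities $\|\nabla^H f|_P\|=\|\nabla^H\tilde f|_P\|$ and $\langle\nabla^H f|_P,\nabla^H\tilde f|_P\rangle_g=0$. My plan for this is to specialize the transformation law at $\psi=\pi/(2s)$: this substitution swaps $f\leftrightarrow\tilde f$ (up to sign) through \cref{prop:inva}, and matching the resulting pair of identities with the general one at a variable $\psi$ forces both constraints simultaneously, thereby closing the argument.
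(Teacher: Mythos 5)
Your first two steps are correct and are in fact more careful than the paper's own one-line argument: right multiplication $\Phi_\psi(P)=PR_3(\psi)$ is a $g$-isometry preserving the splitting $T_PM=\Ho(P)\oplus\R V(P)$, and combining the chain rule with the spin law $f\circ\Phi_\psi=\cos(s\psi)f+\sin(s\psi)\tilde f$, where $\tilde f(P)=f(PR_3(\frac{\pi}{2s}))=\Im X(P)$, gives the sure identity
\be
\|\nabla^H_{PR_3(\psi)}f\|^2=\|\cos(s\psi)\,\nabla^H_Pf+\sin(s\psi)\,\nabla^H_P\tilde f\|^2 .
\ee
You correctly isolate the remaining obligation: the sure identities $\|\nabla^H_Pf\|=\|\nabla^H_P\tilde f\|$ and $\langle\nabla^H_Pf,\nabla^H_P\tilde f\rangle_g=0$. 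Your proposed closing move does not discharge it. Evaluating the displayed identity at $\psi=\frac{\pi}{2s}$ only returns the special case $\|\nabla^H_{PR_3(\pi/2s)}f\|=\|\nabla^H_P\tilde f\|$, which is an instance of the transformation law rather than a new constraint; no choice of $\psi$ within the same one-parameter family can force the two frequency-$2s$ Fourier coefficients to vanish. The argument is circular at exactly the point you flag as the main obstacle.

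Moreover, the obstruction is real and not merely a gap in your write-up. Writing $A=\partial_{h_1}X$, $B=\partial_{h_2}X$ for a $g$-orthonormal basis $h_1,h_2$ of $\Ho(P)$, the two required identities are jointly equivalent to $A^2+B^2=0$, i.e.\ to the pointwise vanishing of the product of the spin-raising and spin-lowering derivatives of $X$. This holds identically only when the spectrum is concentrated at $l=|s|$; as soon as some $c_l\neq 0$ with $l>|s|$ (already $s=1$, $l=2$), the quantity $A^2+B^2$ is a non-trivial quadratic form in the Gaussian coefficients, hence almost surely non-zero, and $\|\nabla^H_{PR_3(\psi)}f\|^2$ genuinely oscillates in $\psi$ with angular frequency $2s$ around the fiber-constant mean $\frac12\big(\|\nabla^H_Pf\|^2+\|\nabla^H_P\tilde f\|^2\big)=\frac12|\nabla^H_PX|^2$; only that mean, and the equality in law, survive. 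For comparison, the paper's proof asserts $D_P^{-1}(\nabla_{PR_3(\psi)}f)=\nabla_Pf$, which by the chain rule amounts to $\nabla_P(f\circ\Phi_\psi)=\nabla_Pf$ and hence implicitly to the right-invariance $f\circ\Phi_\psi=f$, contradicting \cref{eq:spinproperty} when $s\neq0$. So your reduction exposes a genuine problem with the statement itself (and with the fiber-factorization step in the proof of \cref{thm:3}), not only with your own attempt; the fiber-invariant object one can legitimately use is $|\nabla^H_PX|^2$, not $\|\nabla^H_Pf\|^2$.
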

\begin{proof}
The multiplication by $R_3(\psi)$ (that is the flow at time $\psi$ of the vector field $V$) is an isometry of $g$ (and also of $g^f$). By definition, its differential $D_P\colon T_PM\to T_{PR_3(\psi)}M$ sends $V(P)$ to $V(PR_3(\psi))$. Therefore, it also sends $\Ho(P)$ to $\Ho{(PR_3(\psi))}$, isometrically. Note that then $D_P$ is a linear isometry, meaning that $D_P^*=D_P^{-1}$. The chain rule (dualized) gives
\be 
D_P^{-1}\tyu \nabla_{PR_3(\psi)} f\uyt= \nabla_Pf,
\ee
therefore the two vectors have the same norm.
\end{proof}

\subsection{Bonus material: $\Ho$ is a trivial bundle}
The \emph{horizontal} bundle $\Ho$ defined in \cref{def:verHor} is the lift to $SO(3)$ of the tangent bundle of $TS^2$. Despite the latter is non-trivial---by the famous Hairy Ball Theorem---it turns out that $\Ho$ is a trivial bundle on $SO(3)$. This is not a coincidence, in fact it is practically a tautology, given that $SO(3)$ is the frame bundle of $S^2$. We won't rely heavily on this fact, but we think it is worth noticing. So, here's a proof.
\begin{proposition}
The bundle $\Ho\to SO(3)$ has two orthonormal smooth sections $\hat e_i\colon SO(3)\to H$, for $i=1,2$.
\be
\hat e_i(P)=(D_P\pi)^{-1}(P\cdot e_i), 
\ee
where $\pi\colon SO(3)\to S^2$ is the map $\pi(P)=P\cdot e_3$.
\end{proposition}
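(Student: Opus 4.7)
The plan is to verify in order that (a) the claimed formula actually makes sense, (b) the resulting vectors are in $\Ho(P)$ and orthonormal, and (c) the assignment is smooth in $P$. Given the preceding subsection, each of these is essentially a direct observation, so the proof will really be a careful unwinding of definitions.

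First I would check that the putative target vectors $P\cdot e_i$ (for $i=1,2$) actually lie in $T_{\pi(P)}S^2$. Since $P\in SO(3)$ is orthogonal, its three columns $P\cdot e_1, P\cdot e_2, P\cdot e_3$ form an orthonormal basis of $\R^3$; in particular $P\cdot e_1$ and $P\cdot e_2$ are orthonormal and both orthogonal to $P\cdot e_3=\pi(P)$, so they naturally sit in $T_{\pi(P)}S^2$ under the standard embedding $S^2\subset\R^3$.

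Next I would invoke the key fact recalled in the paragraph \emph{``What do we like about this decomposition?''}, namely that the restriction
\[
D_P\pi\big|_{\Ho(P)}\colon (\Ho(P),g_P)\longrightarrow (T_{\pi(P)}S^2,g_{\pi(P)}^{S^2})
\]
is a linear isometry (its kernel inside $T_PM$ is exactly $\R\cdot V(P)=\Ho(P)^{\perp_g}$, and $\pi$ is a Riemannian submersion). Consequently the formula
\[
\hat e_i(P):=\bigl(D_P\pi\big|_{\Ho(P)}\bigr)^{-1}(P\cdot e_i),\qquad i=1,2,
\]
defines two well-determined vectors of $\Ho(P)$. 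Orthonormality of $\hat e_1(P),\hat e_2(P)$ with respect to $g_P$ is then inherited from orthonormality of $P\cdot e_1,P\cdot e_2$ in $T_{\pi(P)}S^2$, via the isometry.

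Finally, for smoothness, the map $P\mapsto P\cdot e_i$ is smooth as the restriction of a linear map $\R^{3\times 3}\to\R^3$; the map $P\mapsto D_P\pi$ is smooth because $\pi$ is smooth; and inversion of a smoothly varying family of invertible linear maps is smooth. Putting these together, $P\mapsto \hat e_i(P)$ is a smooth section of $\Ho\to SO(3)$. I do not expect any genuine obstacle here: the statement is, as the authors note, almost tautological, and once the isometric property of $D_P\pi|_{\Ho(P)}$ is in hand the rest is bookkeeping. If any subtle point arises, it would be making the identification $T_{\pi(P)}S^2=\{v\in\R^3:v\perp \pi(P)\}$ fully explicit, which is handled by the above orthogonality observation on the columns of $P$.
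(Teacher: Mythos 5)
Your proof is correct and follows essentially the same route as the paper's: observe that the columns $P\cdot e_1,P\cdot e_2$ form an orthonormal basis of $T_{P\cdot e_3}S^2$ and pull them back through the isometry $D_P\pi|_{\Ho(P)}$. Your additional smoothness check is a reasonable elaboration of a point the paper leaves implicit, but there is no difference in substance.
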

\begin{proof}
Note that $P\cdot e_i$ form an orthonormal basis of $T_{P\cdot e_3}S^2$. Moreover, we already noted that $D_P\pi\colon \Ho(P)\to T_{P\cdot e_3}S^2$ is an isometry (i.e., $\pi$ is a Riemannian submersion).
\end{proof}
The take-home message of this subsection is the following. In view of the existence of the smooth global sections $\hat{e}_1$, $\hat{e}_2$, we can identify $\Ho(P)\cong \R^2$ isometrically, and interpret \cref{prop:normaljet} as the statement that $\Lambda_P^*j^1_Pf\sim \m N(0,\1_{4})$. 

\section{Proof of the chaos decomposition} \label{sec:chaos} 

\subsection{What is chaos? Preliminaries} \label{subsec:prelim}
The field defined in \eqref{eq:f} is a linear combination of elements of the sequence $\{\Re\gamma_{m,s}^l,\Im\gamma_{m,s}^l\}_{s\in\Z, l\in\N, -l\le m\le l}$, which is defined on some probability space $(\Omega,\mathcal F,\P)$ ($\mathcal F$ being generated by the Gaussian sequence), and verifies \begin{align}
    & \E[\Re\gamma_{m,s}^l] = \E[\Im\gamma_{m,s}^l] = 0 ,\quad \E[\Re\gamma_{m,s}^l\Im\gamma_{m,s}^l] = 0, \\
    & \E[\mathrm P \gamma_{m,s}^l \mathrm Q \gamma_{m',s}^{l'}] = \frac12 \, {\delta_l^{l'} \delta_m^{m'}  \delta_{\mathrm P}^{\mathrm Q}}.
\end{align}
for all $\mathrm P,\mathrm Q\in \{\Re,\Im\}$. It is a standard fact that the closure in $L^2(\P)$ of the space of real linear combinations of $\{\Re\gamma_{m,s}^l,\Im\gamma_{m,s}^l\}_{s,l,m}$, endowed with the scalar product $\langle\cdot,\cdot\rangle = \Cov(\cdot,\cdot)$, is a Gaussian Hilbert space, see e.g. \cite{janson,nourdinpeccatibook} for a full account; we denote it as $\mathcal H$. Let us define $H_q$, the $q^{th}$ Hermite polynomial, as 
\be\label{eq:H}
\sum_{q\in \N}H_q(x)\frac{t^q}{q!}=e^{tx-\frac{t^2}{2}}.
\ee
We refer to \cite{nourdinpeccatibook} for equivalent definitions. In particular, $H_q$ is a polynomial of degree $q$, and together they form a family of orthonormal polynomials for the Gaussian measure on $\R$. We define $\mathcal C_q$, the $q^{th}$ chaos, as the closed linear subspace of $L^2(\P)$ generated by the random variables  $\{H_q(\zeta) : \zeta\in\mathcal H, \|\zeta\|_{\mathcal H}=1\}$: \begin{equation}
    \mathcal C_q := \overline{\textrm{span}\{H_q(\zeta) : \zeta\in\mathcal H, \|\zeta\|_{\mathcal H}=1\}}^{L^2(\P)}.
\end{equation}
The orthogonality of the Hermite polynomials implies that $\mathcal C_q \perp \mathcal C_p$ for all $q\neq p$; moreover, $\mathcal H = \bigoplus_{q\in\N_0} \mathcal C_q$. This fact implies that any $\mathcal F$-measurable and square-integrable functional $F$ can be written in $L^2(\P)$ as \begin{equation}\label{eq:WIC}
    F = \sum_{q=0}^{\infty} F[q],
\end{equation}
where $ F[q] = \mathrm{proj}( F |\mathcal C_q)$ is the (unique) projection of $F$ onto $\mathcal C_q$. \cref{eq:WIC} is usually referred as the \emph{Wiener-It\^o chaos decomposition}, or briefly \emph{chaos decomposition}, of $F$. In particular, $\E[F] = F[0]$ and \begin{equation}
    \Var(F) = \sum_{q=1}^{\infty} \Var(F[q]) < \infty.
\end{equation} 
We remark that any $d$-dimensional subspace $\mathcal H'\subset \mathcal H$, i.e. generated by a finite orthonormal system $\{\eta_1,\ldots,\eta_d\}\subset \mathcal H$, is a proper Gaussian Hilbert space.\footnote{To exemplify the situation, in the following we consider $\eta = (f(P),\nabla f(P))\sim \mathcal N(0,C)$ for some $4\times4$ covariance matrix $C$, and functionals of the field and its gradient at a fixed point $P\in SO(3)$, i.e. $F=F(f(P),\nabla f(P))$. In view of \cref{eq:ROTAZIONEEEE}, it will be particularly easy to derive the chaos decomposition of functionals that only depend on a specific norm of $\nabla f(P)$.} Therefore, it admits its own chaotic decomposition $\mathcal H' = \bigoplus_{q\in\N_0} \mathcal C_q'$ into a direct sum of finite-dimensional (hence closed) spaces \begin{equation}
    \mathcal C_q' := \textrm{span}\{ H_q(\langle \eta, v\rangle) : v\in S^{d-1} \} ,
\end{equation} 
where $\eta = (\eta_1,\ldots,\eta_d) \sim\mathcal N(0,\1_d)$. In particular, $\m C_q'=\m C_q\cap \mathcal H'$. As a consequence, if $F=F(\eta)\in\mathcal H'$ only depends on the norm of $\eta$, we can write \begin{equation}\label{eq:ROTAZIONEEEE}
    F(\eta)[q] = c_q \int_{S^{d-1}} H_q(\langle \eta, v\rangle) dv, \quad \text{where } c_q := \frac{\E[F(\eta)H_q(\eta_1)]}{q!\beta(d,q)\vol{}(S^{d-1})}.
\end{equation}

\subsection{Spherical chaos}\label{sec:spherchaos}
As hinted by the previous discussion, and become more evident in the proofs, see e.g. \eqref{eq:nodalchaosAT}, a special role is played by polynomials introduced in the next definition. We reveal (and will prove in the next proposition) that they are orthogonal polynomials for the  $\chi^2$ distributions of parameter $n$.

\begin{definition}\label{def:sigma}
For any $n,b\in\N$, we define the polynomials $\Sigma_{n,b}\in \R[X]$ in one variable, of degree $b$ as
\be \label{eq:lag}
\Sigma_{n,b}(\|x\|^2):=S_{n,2b}(x):=\int_{S^{n-1}}H_{2b}\tyu \langle x,u\rangle\uyt \dd u,
\ee
where $x\in \R^n$. In what follows, we will denote $\Sigma_b:=\Sigma_{2,b}$, omitting the subscript when $n=2$. 
\end{definition}

The well-posedness of the above definition is due to the fact that $H_{2b}(t)$ is a polynomial in $t^2$, and that $S_{n,b}(x)$ a function of the norm of $x$ only, by rotational invariance. We remark the polynomials $S_{n,2b}$ have been already defined and analyzed in \cite{cgv2025StecconiTodino}, whose noteworthy properties are summarized in the next proposition.
\begin{proposition}\label{prop:ortogchi}
If $\eta\sim \m N(0,\1_n)$, then $\Sigma_{n,b}\tyu \|\eta\|^2\uyt$ is an element of the Wiener chaos of order $2b$, with variance (if $n=2)$
\be 
\sigma_{b}^2:=\Var\kop \Sigma_{b}\tyu \|\eta\|^2\uyt \pok= (2b)!\cdot 4\pi^{\frac 32}\frac{\Gamma\tyu b+\frac12\uyt}{b!} .
\ee Given $\F(\eta)\in L^2$, such that $\F\colon \R^n\to \R$ depends only on $\|\eta\|$, then the series
\be 
\F(\eta)= \sum_{b\in\N} a_b(\F)\Sigma_{n,b}\tyu \|\eta\|^2\uyt.
\ee
converges in $L^2$,
for some coefficients $a_b(\F)\in\R$ and all terms are pairwise uncorrelated.
\end{proposition}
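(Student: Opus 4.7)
My plan is to prove the three assertions of the proposition in order: chaos order, variance formula, and $L^2$ completeness.

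\textbf{Membership in $\mathcal C_{2b}$.} For each fixed $u\in S^{n-1}$, the random variable $\langle\eta,u\rangle$ is standard Gaussian, so $H_{2b}(\langle\eta,u\rangle)\in \mathcal C_{2b}$ by the definition of chaos recalled in \cref{subsec:prelim}. Since the map $u\mapsto H_{2b}(\langle\eta,u\rangle)$ is continuous from the compact set $S^{n-1}$ into $L^2(\P)$, and $\mathcal C_{2b}$ is closed, the Bochner integral $\int_{S^{n-1}} H_{2b}(\langle\eta,u\rangle)\,du$ lies in $\mathcal C_{2b}$. This gives (i).

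\textbf{Variance when $n=2$.} The key input is the orthogonality relation $\E[H_{2b}(\langle\eta,u\rangle)H_{2b}(\langle\eta,v\rangle)] = (2b)!\,\langle u,v\rangle^{2b}$, valid since $\langle\eta,u\rangle$ and $\langle\eta,v\rangle$ are jointly centered Gaussians with unit variances and covariance $\langle u,v\rangle$. Fubini applied to $\E[\Sigma_b(\|\eta\|^2)^2]$ then reduces the variance to $(2b)!\int_{S^1\times S^1}\langle u,v\rangle^{2b}\,du\,dv$. Parametrizing $u=(\cos\alpha,\sin\alpha)$ and $v=(\cos\beta,\sin\beta)$, the double integral becomes $2\pi\int_0^{2\pi}\cos^{2b}\theta\,d\theta$, which by a Wallis/Beta computation equals $(2\pi)^2\binom{2b}{b}/4^b$. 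Using the Gamma identity $\Gamma(b+\tfrac12)=\sqrt{\pi}\,(2b)!/(4^b b!)$ rewrites $\binom{2b}{b}/4^b = \Gamma(b+\tfrac12)/(\sqrt{\pi}\,b!)$, yielding the announced $4\pi^{3/2}(2b)!\,\Gamma(b+\tfrac12)/b!$.

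\textbf{Completeness for radial functionals.} I start from the full Wiener--It\^o decomposition $\mathcal F(\eta)=\sum_{q\ge 0}\mathcal F(\eta)[q]$, guaranteed by the assumption $\mathcal F(\eta)\in L^2$. The group $O(n)$ acts on $L^2(\P)$ by $R\cdot G(\eta):=G(R^{-1}\eta)$; this action preserves the Gaussian law of $\eta$ and sends each chaos $\mathcal C_q$ to itself, so it commutes with the chaos projection. Since $\mathcal F$ depends only on $\|\eta\|$, it is $O(n)$-invariant, and thus each $\mathcal F(\eta)[q]$ is an $O(n)$-invariant element of $\mathcal C_q$. The final step is to identify these invariants: the $O(n)$-invariant polynomials in the components of $\eta$ are polynomials in $\|\eta\|^2$, so the invariant subspace of $\mathcal C_{2b}$ is one-dimensional and generated by any nonzero invariant element, while the invariant subspace of $\mathcal C_{2b+1}$ is trivial (no odd-degree $O(n)$-invariant polynomial exists). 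Since $\Sigma_{n,b}(\|\eta\|^2)\in\mathcal C_{2b}$ is manifestly invariant and nonzero (it is the $O(n)$-average of the chaos generator $H_{2b}(\langle\eta,e_1\rangle)$, so its $L^2$-norm computed as in (ii) is positive), it spans that one-dimensional invariant subspace. Consequently $\mathcal F(\eta)[2b]$ is a scalar multiple $a_b(\mathcal F)\Sigma_{n,b}(\|\eta\|^2)$ and $\mathcal F(\eta)[2b+1]=0$; pairwise uncorrelation follows from the orthogonality of distinct chaoses.

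The step I expect to be most delicate is the invariant-theoretic argument at the end of the third part: specifically, justifying that the $O(n)$-invariant subspace of $\mathcal C_{2b}$ is one-dimensional. I plan to make this concrete by writing any element of $\mathcal C_{2b}$ as a polynomial of degree $2b$ in $\eta$ (modulo lower-degree Hermite corrections), observing that $O(n)$-invariant polynomials of degree $2b$ on $\R^n$ form the one-dimensional space $\R\cdot \|\eta\|^{2b}$, and concluding that $\Sigma_{n,b}(\|\eta\|^2)$ must equal its unique (up to scalar) nonzero representative in $\mathcal C_{2b}$.
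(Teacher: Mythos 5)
Your proof is correct, and it is genuinely more self-contained than the paper's. The paper disposes of both the chaos-membership claim and the completeness claim by citing \cite[Theorem 3.4]{cgv2025StecconiTodino}, and only computes the variance, using the same two ingredients you use ($\E[H_{2b}(\langle\eta,u\rangle)H_{2b}(\langle\eta,v\rangle)]=(2b)!\langle u,v\rangle^{2b}$ plus a spherical integral, the latter taken from \cite[Lemma C.1]{cgv2025StecconiTodino} for general $n$ before specializing to $n=2$; your direct Wallis computation for $n=2$ gives the same answer). What you add is a proof of the two deferred claims: the Bochner-integral argument for membership in $\mathcal C_{2b}$ is clean and standard, and the $O(n)$-invariance argument for completeness is a nice conceptual route --- the chaos projections commute with the unitary action $G(\eta)\mapsto G(R^{-1}\eta)$, and the invariant subspace of the $q^{\text{th}}$ chaos of the finite-dimensional Gaussian space generated by $\eta$ is identified, via the equivariant isomorphism with $\mathrm{Sym}^q(\R^n)$ and the harmonic decomposition of homogeneous polynomials, as one-dimensional for $q$ even and trivial for $q$ odd. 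Two small points of care: you should say explicitly that you work with $\mathcal C_q'=\mathcal C_q\cap L^2(\sigma(\eta))$ rather than the full ambient chaos $\mathcal C_q$ (which is infinite-dimensional, so ``the invariant subspace of $\mathcal C_{2b}$ is one-dimensional'' is only true after this restriction --- in context this is clearly what you mean); and the non-vanishing of $\Sigma_{n,b}(\|\eta\|^2)$ for general $n$ follows from positivity of $(2b)!\int\int\langle u,v\rangle^{2b}\,du\,dv$ rather than from the $n=2$ formula of part (ii). Neither is a gap, just a matter of phrasing. Your approach buys independence from the external reference at the cost of the representation-theoretic digression; the paper's buys brevity.
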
 
\begin{proof}
All follows from \cite[Theorem 3.4]{cgv2025StecconiTodino}. The only thing to prove is the variance:
\bega
\Var\kop \Sigma_{n,b}\tyu \|\eta\|^2\uyt \pok 
&=
\int_{S^{n-1}}\int_{S^{n-1}} (2b)! (\langle u,v\rangle)^{2b} \dd u \dd v
\\
&= (2b)!s_{n-1} 2\pi^{\frac{n-1}{2}}\frac{\Gamma\tyu \frac{2b+1}{2}\uyt}{\Gamma\tyu \frac{2b+n}{2}\uyt}
\\
&= (2b)!\frac{1}{\Gamma\tyu\frac n2\uyt} 4\pi^{n-\frac12}\frac{\Gamma\tyu \frac{2b+1}{2}\uyt}{\Gamma\tyu \frac{2b+n}{2}\uyt}.
\eega
which follows from \cite[Lemma C.1]{cgv2025StecconiTodino}. Specializing the above to $n=2$, we conclude.
\end{proof}
\begin{remark}
    In \cite[Theorem 3.4]{cgv2025StecconiTodino} the authors compute, in particular, $a_{b}(\|\cdot \|_{\R^n})=A(n,2b)$. Moreover, in \cite[Appendix D]{cgv2025StecconiTodino}, the following relation with the Laguerre polynomials is established:
    \be\label{eq:LaS}
L_{b}^{(\frac n2 -1)}\tyu \frac{\|x\|^2}{2}\uyt=c_{n,2b} \cdot S_{n,2b}(x)=c_{n,2b} \cdot \Sigma_{n,b}(\|x\|^2), \quad \forall x \in \R^{n}.
\ee
The constant $c_{n,2b}$ is computed in \cite[Lemma D.2]{cgv2025StecconiTodino}.
\end{remark}
In what follows, we will consider $n=2$.

\subsection{Preliminary lemmas}

\begin{definition}\label{def:2F1}
    We recall the definition and notation of the classical hypergeometric function,
    \be 
{}_{2}{F}_1\tyu a,b;c; z\uyt= \frac{\Gamma(c)}{\Gamma(a)\Gamma(b)}\sum_{k=0}^{\infty}\frac{\Gamma(a+k)\Gamma(b+k)}{\Gamma(c+k)k!} z^k,
    \ee
defined first for $|z|< 1$ and extended by analytic continuation. We will only consider it for real values of $a,b,c,z$.
\end{definition}
We recall the value of ${}_{2}{F}_1\tyu a,b;c; 1\uyt$, obtained as the limit $z\to 1-$, in the case when $c-a-b>0$, that is,
\be \label{eq:Fs0}
{}_{2}{F}_1\tyu a,b;c; 1\uyt=\frac{\Gamma\tyu c\uyt \Gamma\tyu c-a-b\uyt}{\Gamma\tyu c-a\uyt \Gamma\tyu c-b\uyt}.
\ee
We will use such formula in the proof of \cref{prop:s0} to analyze the case $s=0$ and in the proof of \cref{thm:2} below.
\begin{definition}[Coefficient $\mathcal I$]
    Define for any $i\le b$ natural numbers, and $\frac{s^2}{\xi^2}\in (0,1]$
    \be \label{eq:IBF}
\m I^b_i\tyu{\frac{s^2}{\xi^2}}\uyt:= B\left(i + \frac{1}{2},\, b - i + 1\right) \cdot
{}_2F_1\left(
b-\frac{ 1}{2},\,
i + \frac{1}{2};\,
b + \frac{3}{2};\,
1-\frac{s^2}{\xi^2}
\right),
    \ee
    where $B(z,w)=\frac{\Gamma(z)\Gamma(w)}{\Gamma(z+w)}$ is the Beta function and $\prescript{}{2}{F}_1(a,b;c;z)$ is the hypergeometric function (defined for $|z|<1$). 
\end{definition}
\begin{lemma}[Gradient's spherical chaos] \label{thm:1} The random variable $\widetilde{H}_{2b}(d_Pf)$, defined as
\be 
\widetilde{H}_{2b}(d_Pf):=
    \int_{S(T_PM)} 
 H_{2b}\tyu \frac{\langle d_Pf, u\rangle}{\|u\|_{g^f}}\uyt \|u\|_{g^f}du ,
 \ee
depends only on $\|\nabla^H_P f\|$ and $\frac1s \de_\psi f = f\tyu PR_3\tyu \frac{\pi}{2s}\uyt\uyt$, see \eqref{eq:decNabla}. In particular, it takes the following expression:
\begin{multline} 
\widetilde{H}_{2b}(d_Pf) = \xi \cdot \sum_{i+j=b}\binom{2b}{2i} \cdot \m I_{i}^b\!\tyu {\frac{s^2}{\xi^2}}\uyt\cdot \tyu {\frac{s^2}{\xi^2}}\uyt ^{i} \\
\times H_{2i}\tyu f\tyu PR_3\tyu \frac{\pi}{2s}\uyt\uyt\uyt \Lag_j\tyu \frac{\|\nabla^H_Pf\|^2}{\xi^2}\uyt.
\end{multline}
\end{lemma}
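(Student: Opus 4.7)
The plan is to reduce the integral over $S(T_PM)$ to iterated integrals over $S^1$ and $[0,\pi]$ via the $g$-orthogonal decomposition $T_PM = \Ho(P)\oplus \R V(P)$ from \cref{def:verHor}. Every unit vector $u\in S(T_PM)$ has a unique representation $u = \sin\alpha\cdot w + \cos\alpha\cdot V(P)$ with $\alpha\in[0,\pi]$, $w\in S(\Ho(P))\cong S^1$, and surface element $du = \sin\alpha\,d\alpha\,dw$. By \cref{prop:ortogonality}, $\|u\|_{g^f}^2 = \xi^2\sin^2\alpha + s^2\cos^2\alpha =: C^2$, while \cref{prop:inva} gives $\langle d_Pf,u\rangle = \sin\alpha\,\langle \nabla^H_P f,w\rangle + s\cos\alpha\cdot f(PR_3(\pi/(2s)))$.

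By \cref{prop:normaljet}, for every fixed $w$ the variables $\eta_0 := f(PR_3(\pi/(2s)))$ and $\eta_w := \langle \nabla^H_P f,w\rangle/\xi$ are independent standard Gaussians. Setting $A=\xi\sin\alpha$ and $B=s\cos\alpha$, the argument of $H_{2b}$ equals $(A\eta_w+B\eta_0)/C$, which is itself standard Gaussian since $(A/C)^2+(B/C)^2=1$. The classical product formula (derivable from the generating function $\sum_n H_n(x)t^n/n!=e^{tx-t^2/2}$) then yields
\begin{equation*}
H_{2b}\!\tyu\tfrac{A\eta_w+B\eta_0}{C}\uyt = \sum_{k=0}^{2b}\binom{2b}{k}\frac{A^kB^{2b-k}}{C^{2b}}H_k(\eta_w)\,H_{2b-k}(\eta_0).
\end{equation*}

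Integrating first in $w$, the odd-$k$ terms vanish by the symmetry $\eta_{-w}=-\eta_w$, while for $k=2j$ \cref{def:sigma} gives $\int_{S^1} H_{2j}(\eta_w)\,dw = \Sigma_j(\|\nabla^H_P f\|^2/\xi^2)$, which is $\alpha$-independent. Re-indexing $i = b-j$ (note $\binom{2b}{2j}=\binom{2b}{2i}$), the double integral collapses to
\begin{equation*}
\sum_{i+j=b}\binom{2b}{2i}\tyu\int_0^\pi\frac{A^{2j}B^{2i}}{C^{2b-1}}\sin\alpha\,d\alpha\uyt H_{2i}(\eta_0)\,\Sigma_j\!\tyu\tfrac{\|\nabla^H_P f\|^2}{\xi^2}\uyt.
\end{equation*}

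The only nontrivial remaining step, and the main obstacle, is to identify the $\alpha$-integral with $\xi\cdot(s^2/\xi^2)^i\cdot\mathcal{I}^b_i(s^2/\xi^2)$. Pulling out $\xi^{2(b-i)}s^{2i}$, performing $u=\cos\alpha$ and then $t=u^2$, and factoring $\xi^{2b-1}$ out of $C^{2b-1}$ reduces the integral to $\xi\cdot(s^2/\xi^2)^i\int_0^1 t^{i-1/2}(1-t)^{b-i}(1-(1-s^2/\xi^2)t)^{-(b-1/2)}\,dt$. Matching the parameters $a=b-\tfrac12$, $\beta=i+\tfrac12$, $c=b+\tfrac32$, $z=1-s^2/\xi^2$ in the Euler integral representation of ${}_2F_1$ produces precisely $B(i+\tfrac12,b-i+1)\cdot{}_2F_1(b-\tfrac12,i+\tfrac12;b+\tfrac32;1-s^2/\xi^2)=\mathcal{I}^b_i(s^2/\xi^2)$. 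Collecting prefactors gives the announced formula, and the fact that every random factor depends only on $f(PR_3(\pi/(2s)))$ and $\|\nabla^H_P f\|$ establishes the asserted functional dependence.
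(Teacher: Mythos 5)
Your proof is correct and follows essentially the same route as the paper's: the same polar decomposition $u=\sin\alpha\cdot w+\cos\alpha\cdot V(P)$ with $du=\sin\alpha\,d\alpha\,dw$, the same Hermite addition formula (the paper's \cref{lem:sumprod}), integration over $S(\Ho(P))$ producing $\Sigma_j$ with odd terms vanishing by parity, and the same reduction of the $\alpha$-integral to $\mathcal I^b_i$ via $x=\cos\alpha$, $t=x^2$ and Euler's integral representation of ${}_2F_1$ (the paper's \cref{lem:Iib}). No gaps; the bookkeeping of the powers of $\xi$ and $s$ and the re-indexing $i=b-j$ match the paper's computation.
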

We refer to \cref{sec:auxProof} for the proof.

\begin{lemma}[Nodal volume integrated along the fibers] \label{thm:2} Let us define for any $x\in S^2$, and $a,b\in \N$
\be 
\widetilde{H}_{2a,2i}(f,x)=\int_{\pi^{-1}(x)}
 H_{2a}(f(P))
H_{2i}\tyu f\tyu 
PR_3\tyu \frac{\pi}{2s}\uyt\uyt\uyt
\dd P.
\ee
Let $P$ be some fixed element of $\pi^{-1}(x)$. 
Then, 
    \be 
\widetilde{H}_{2a,2i}(f,x)
=\frac{\Lag_{a+i}\tyu \|f\|^2_{\spin{s}_x}\uyt}{_2F_1(a,i;a+i+\frac12;1)} .
    \ee
\end{lemma}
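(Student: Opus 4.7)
The plan is to parametrize the fiber $\pi^{-1}(x)$ as $\psi\mapsto PR_3(\psi)$ for a fixed $P\in\pi^{-1}(x)$ and $\psi\in[0,2\pi)$, exploit the spin-$s$ identity \eqref{eq:spinproperty} to collapse the integrand into a one-variable trigonometric polynomial depending only on $r:=\|f\|_{\spin{s}_x}$, and then identify the resulting integral as a scalar multiple of $\Sigma_{a+i}(r^{2})$ via a Wiener chaos argument on $\R^{2}$. Since the vertical field $V$ has unit length, the above parametrization is unit-speed, so $\dd P$ restricts to $\dd\psi$ on the fiber. Setting $a_{0}:=f(P)$ and $b_{0}:=f(PR_3(\pi/(2s)))$, one has $r^{2}=a_{0}^{2}+b_{0}^{2}=\|f\|^{2}_{\spin{s}_x}$ by \eqref{eq:randomnorm}; writing $a_{0}=r\cos\theta$, $b_{0}=r\sin\theta$ and applying \eqref{eq:spinproperty} gives $f(PR_3(\psi))=r\cos(s\psi-\theta)$ and $f(PR_3(\psi+\tfrac{\pi}{2s}))=-r\sin(s\psi-\theta)$. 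The substitution $\alpha=s\psi-\theta$ (justified by the $2\pi/|s|$-periodicity of the integrand in $\psi$, since $s\in\Z$) together with $H_{2i}(-t)=H_{2i}(t)$ then reduces the fiber integral to
\[
\widetilde H_{2a,2i}(f,x)\;=\;I(r)\;:=\;\int_{0}^{2\pi} H_{2a}(r\cos\alpha)\,H_{2i}(r\sin\alpha)\,\dd\alpha,
\]
a polynomial in $r^{2}$ of degree $a+i$ that is manifestly independent of $s$ and $\theta$.

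Next, extend $I$ to $\widetilde I\colon\R^{2}\to\R$ by setting $\widetilde I(u,v):=\int_{0}^{2\pi}H_{2a}(u\cos\alpha+v\sin\alpha)\,H_{2i}(-u\sin\alpha+v\cos\alpha)\,\dd\alpha$, so that $I(r)=\widetilde I(r,0)$. For $(U,V)\sim\m N(0,\1_{2})$, the rotated pair $(U_\alpha,V_\alpha):=(U\cos\alpha+V\sin\alpha,\,-U\sin\alpha+V\cos\alpha)$ is standard Gaussian, so $H_{2a}(U_\alpha)H_{2i}(V_\alpha)$ belongs to the $2(a+i)$-th Wiener chaos generated by $(U,V)$; expanding it in the basis $\{H_{p}(U)H_{q}(V):p+q=2(a+i)\}$ with $\alpha$-dependent coefficients and integrating termwise shows that $\widetilde I(U,V)$ still lies in this closed subspace. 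Moreover, the substitution $\alpha\mapsto\alpha+\beta$ makes $\widetilde I$ invariant under the diagonal action of $SO(2)$ on $(u,v)$. By \cref{prop:ortogchi} applied with $n=2$, the rotation-invariant part of the $2(a+i)$-th chaos is one-dimensional and spanned by $\Sigma_{a+i}(U^{2}+V^{2})$, whence $\widetilde I(u,v)=C_{a,i}\,\Sigma_{a+i}(u^{2}+v^{2})$ for some scalar $C_{a,i}$ depending only on $a,i$.

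The constant $C_{a,i}$ can be pinned down by matching the coefficient of $r^{2(a+i)}$: using $H_{n}(t)=t^{n}+O(t^{n-2})$ and the Beta-function evaluation $\int_{0}^{2\pi}\cos^{2p}\alpha\sin^{2q}\alpha\,\dd\alpha=\frac{2\pi(2p)!(2q)!}{4^{p+q}p!q!(p+q)!}$ on both sides gives $C_{a,i}=\frac{(2a)!\,(2i)!\,(a+i)!}{(2(a+i))!\,a!\,i!}$. Finally, Gauss's evaluation \eqref{eq:Fs0} applied with $c=a+i+\tfrac{1}{2}$, combined with the identity $\Gamma(k+\tfrac{1}{2})=(2k)!\sqrt{\pi}/(4^{k}k!)$, yields ${}_{2}F_{1}(a,i;a+i+\tfrac{1}{2};1)=1/C_{a,i}$, closing the argument. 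The main obstacle is the Wiener chaos identification of the previous paragraph: one must carefully justify that integration in $\alpha$ preserves membership in the $2(a+i)$-th chaos (guaranteed by the continuity of chaos projections and the polynomial nature of the integrand) and then invoke the characterization of its $SO(2)$-invariant elements via \cref{prop:ortogchi}. The subsequent evaluation of $C_{a,i}$ is a routine Gamma-function computation.
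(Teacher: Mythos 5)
Your proof is correct and follows essentially the same route as the paper: parametrize the fiber by $\psi\mapsto PR_3(\psi)$, use the spin identity and $2\pi/|s|$-periodicity to eliminate $s$, identify the result as a rotation-invariant element of the $2(a+i)$-th chaos (hence a multiple of $\Lag_{a+i}(\|f\|^2_{\spin{s}_x})$ by \cref{prop:ortogchi}), and evaluate the constant via Gauss's formula \eqref{eq:Fs0}. The only difference is cosmetic: you pin down $C_{a,i}$ by matching the leading coefficient in $r^{2(a+i)}$, whereas the paper computes the same Beta-function ratio as an $L^2$ projection coefficient using the orthogonality of Hermite polynomials.
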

We refer to \cref{sec:auxProof} for the proof.
\subsection{Proof of \cref{thm:3}: the case $s\neq 0$} \label{sec:proofs}
Let us give the precise definition of the coefficients $\kappa_t$, and recall the statement of our first main result. 
\begin{definition}[Coefficient $\kappa_t$]
Define \begin{multline} \label{eq:immondo}
    \immondo_t\tyu \a, \b, {\frac{s^2}{\xi^2}}\uyt:= \sum_{i=0}^{\a}
{}_2F_1\left(
\b+i-\frac{ 1}{2},\,
i + \frac{1}{2};\,
\b+i + \frac{3}{2};\,
1-\frac{s^2}{\xi^2}
\right)
\\
\times
\tyu  
{\frac{s^2}{\xi^2}}\uyt^{i} 
\nu(i,\b,\a)\frac{H_{2\a-2i}(t)}{H_{2\a-2i}(0)},
\end{multline}
where
\be 
\nu(i,\beta,\a)= \frac{B\left(i + \frac{1}{2},\, \beta + 1\right)}{_2F_1(\a-i,i;\a+\frac12;1)} \binom{2(i+\beta)}{2i}\cdot \frac{\coeff(\a-i,\beta+i)}{s_3}.
\ee
\end{definition}
\begin{manualtheorem}{3.1} 
If $s\neq 0$. For any Borel set $D\subset S^2$, we have that the $q^{th}$ chaos component of boundary area measure is given by
\begin{multline}
\frac{\lft(\pi^{-1}(D))[q]}{\xi}
=\sum_{\substack{\a,\b\in \N,\a+\b=\!\frac{q}2
\\ }}  
e^{-\frac{t^2}{2}}
\immondo_t\tyu \a,\b, \frac{s}{\xi}\uyt \\ \times
\int_D  
\Lag_{\a}(\|f\|_{\spin{s}_x}^2)
\Lag_\b\tyu \frac{\|\nabla^H_xf\|^2}{\xi^2}\uyt
\dd x.
\end{multline}
\end{manualtheorem}

\begin{remark}
    From a careful investigation of the proof, it is clear that an analogous decomposition holds for more general Borel sets $B\subset SO(3)$, that are not necessarily union of fibers of $\pi$, i.e. $B\subsetneq \pi^{-1}(\pi(B))$: \begin{multline}
        \lft(B) [q] = 
\sum_{a,b\in \N,a+b=\!\frac{q}2}
e^{-\frac{t^2}{2}}\frac{H_{2a}(t)}{H_{2a}(0)}
\!\frac{\coeff(a,b)}{s_3} \\
\times \int_{\pi(B)} \int_{\pi^{-1}(x)\cap B}
 H_{2a}(f(P)) \widetilde{H}_{2b}\tyu  d_Pf \uyt \dd P \dd x.
    \end{multline}
    See \cref{eq:decFiberWise} for comparison.
\end{remark}
\begin{proof}[Proof of \cref{thm:3}, given \cref{thm:1} and \cref{thm:2}]
    The main result of \cite{cgv2025StecconiTodino}, specialized to the three dimensional setting, is the following formula. Denoting $\|u\|_{g^f}^2=\E\kop |d_Pf(u)|^2\pok
$ for all $u\in T_xM$, then the $q^{th}$ chaos component of the nodal measure of $f$ evaluated at $B$ is 
\begin{multline}\label{eq:nodalchaosAT} 
\lft(B)[q]
=
\sum_{a,b\in \N,a+b=\!\frac{q}2}
e^{-\frac{t^2}{2}}\frac{H_{2a}(t)}{H_{2a}(0)}
\!\frac{\coeff(a,b)}{s_3} \\
\times \int_B  
 H_{2a}(f(P)) \underbrace{\int_{S(T_PM)}H_{2b}\tyu \frac{\langle d_Pf, u\rangle}{\|u\|_{g^f}}\uyt \|u\|_{g^f}du}_{=:\widetilde{H}_{2b}\tyu  d_Pf \uyt}   dP,
\end{multline}
for all $q$ even, while it is zero for all odd $q$. The constant $\coeff(a,b)$ is given by
\be\label{eq:defC} 
\coeff(a,b)=\frac{(-1)^{a+b-1}}{2^{a+b}(2b-1)a!b!},
\ee
for all $a,b\in \N$; $s_3=2\pi^2$ denotes the volume of the unit $3$-sphere, while $H_i$ denotes the Hermite polynomial of degree $i$. 

Let us consider $B=\pi^{-1}(D)\subset SO(3)$, for some $D\subset S^2$. Then, with the needed substitutions and applying \cref{thm:1}, we may write
\begin{align} 
& \lft(B) [q] \nonumber \\
& \quad = \label{eq:decFiberWise}
\sum_{a,b\in \N,a+b=\!\frac{q}2}
e^{-\frac{t^2}{2}}\frac{H_{2a}(t)}{H_{2a}(0)}
\!\frac{\coeff(a,b)}{s_3} \int_D\int_{\pi^{-1}(x)}
 H_{2a}(f(P)) \widetilde{H}_{2b}\tyu  d_Pf \uyt \dd P \dd x
 \\
 &
 \overset{\text{\cref{thm:1}}}{=}
 \sum_{a,b\in \N,a+b=\!\frac{q}2}
e^{-\frac{t^2}{2}}\frac{H_{2a}(t)}{H_{2a}(0)}
\!\frac{\coeff(a,b)}{s_3} \int_D\int_{\pi^{-1}(x)}
 H_{2a}(f(P)) 
\nonumber \\
& \qquad \times 
\xi \cdot \sum_{i+j=b} \m I_{i}^b\tyu \frac s\xi\uyt \cdot  \tyu \frac s\xi\uyt ^{2i}\binom{2b}{2i}  H_{2i}\tyu \frac{\de_\psi f}{s}\uyt \Lag_j\tyu \frac{\|\nabla^H_Pf\|^2}{\xi^2}\uyt
\ \m I_{i}^b
\dd P \dd x
\nonumber \\
 &\overset{\text{ Prop. \ref{prop:samenorm}}}{=}
 \sum_{a,b\in \N,a+b=\!\frac{q}2} \xi \cdot \sum_{i+j=b}\ \m I_{i}^b\!\tyu \frac s\xi\uyt \cdot \tyu \frac s\xi\uyt^{2i}\binom{2b}{2i}  
e^{-\frac{t^2}{2}}\frac{H_{2a}(t)}{H_{2a}(0)}
\!\frac{\coeff(a,b)}{s_3} 
\nonumber \\
&\qquad \times 
\int_D \Lag_j\tyu \frac{\|\nabla^H_xf\|^2}{\xi^2}\uyt \underbrace{\int_{\pi^{-1}(x)}
 H_{2a}(f(P))
H_{2i}\tyu \frac{\de_\psi f}{s}\uyt 
\dd P}_{=: \widetilde{H}_{2a,2i}(f,x)} \dd x; \nonumber
\end{align}
where in the last step we used that the norm of $\nabla^H f$ is invariant along the fibers, as proved in \cref{prop:samenorm}. 
Since $ \de_\psi f = s\cdot f\tyu PR_3\tyu \frac{\pi}{2s} \uyt\uyt$ (\cref{prop:inva}), applying \cref{thm:2} we can conveniently represent the term {$\widetilde{H}_{2a,2i}(f,x)$, }
defined for any $x\in S^2$, and $a,b\in \N$, in terms of $\Sigma_a(\|f\|_{\spin{s}})$, which is defined in \cref{def:spinsnorm}. 
Then, we have that 
\bega 
\lft(B)[q]&
\overset{\text{\cref{thm:2}}}{=}
\sum_{a,b\in \N,a+b=\!\frac{q}2} \xi \cdot \sum_{i+j=b}\ \m I_{i}^b\!\tyu \frac s\xi\uyt\tyu \frac s\xi\uyt^{2i}\binom{2b}{2i} 
e^{-\frac{t^2}{2}}\frac{H_{2a}(t)}{H_{2a}(0)}
\!\frac{\coeff(a,b)}{s_3} 
\\
&\qquad \times 
\int_D \Lag_j\tyu \frac{\|\nabla^H_xf\|^2}{\xi^2}\uyt 
\frac{\Lag_{a+i}\tyu \|f\|^2_{\spin{s}_x}\uyt}{_2F_1(a,i;a+i+\frac12;1)}
\dd x.
\eega
Then, taking
\bega 
\nu(i,j,a+i)
&=
\frac{B\left(i + \frac{1}{2},\, b - i + 1\right) }{_2F_1(a,i;a+i+\frac12;1)}\binom{2b}{2i}\cdot \frac{\coeff(a,b)}{s_3} \cdot 
\\
&=
\frac{B\left(i + \frac{1}{2},\, j + 1\right) }{_2F_1(a,i;a+i+\frac12;1)}\binom{2(i+j)}{2i}\cdot \frac{\coeff(a,i+j)}{s_3} \cdot 
\eega
(where $b=i+j$)
we have that
\begin{gather} 
\m I_{i}^b\!\tyu \frac s\xi\uyt\tyu \frac s\xi\uyt^{2i}\binom{2b}{2i}\cdot \frac{\coeff(a,b)}{s_3} \cdot 
\frac{1 }{_2F_1(a,i;a+i+\frac12;1)}
\\
=
{}_2F_1\left(
b-\frac{ 1}{2},\,
i + \frac{1}{2};\,
b + \frac{3}{2};\,
1-\frac{s^2}{\xi^2}
\right)\tyu \frac s\xi\uyt^{2i}\nu(i,j,a+i)=:\nu'(i,j,a+i, \frac{s}{\xi}).
\end{gather}
The previous implies that 
\begin{align}
& \frac{\lft(B)[q]}{\xi} = 
\sum_{a,b\in \N,a+b=\! \frac{q}2}  \sum_{i+j=b}
\nu'(i,j,a+i, \frac{s}{\xi})
e^{-\frac{t^2}{2}}\frac{H_{2a}(t)}{H_{2a}(0)} \\
& \hspace{5cm} \times \int_D \Lag_j\tyu \frac{\|\nabla^H_xf\|^2}{\xi^2}\uyt 
\Lag_{a+i}(\|f\|_{\spin{s}_x}^2)\dd x
\\
&=
\sum_{\substack{\a,\b\in \N,\a+\b=\!\frac{q}2
\\ }}  \sum_{i=0}^{\a}
\nu'(i,\b,\a, \frac{s}{\xi})
e^{-\frac{t^2}{2}}\frac{H_{2\a-2i}(t)}{H_{2(\a-i)}(0)}\\
& \hspace{5cm} \times \int_D \Lag_\b\tyu \frac{\|\nabla^H_xf\|^2}{\xi^2}\uyt 
\Lag_{\a}(\|f\|_{\spin{s}_x}^2)\dd x
\\
&=
\sum_{\substack{\a,\b\in \N,\a+\b=\!\frac{q}2
\\ }}  \immondo_t\tyu \a,\b, \frac{s}{\xi}\uyt
e^{-\frac{t^2}{2}}
\int_D  
\Lag_{\a}(\|f\|_{\spin{s}_x}^2)
\Lag_\b\tyu \frac{\|\nabla^H_xf\|^2}{\xi^2}\uyt
\dd x,
\end{align}
where $\a=a+i$ and $\beta=j$, so that $b=\beta+i$.
We conclude by checking the final coefficient:
\bega \label{eq:controllimmondo}
    \immondo_t\tyu \a, \b, {\frac{s^2}{\xi^2}}\uyt
    &= \sum_{i=0}^{\a}
{}_2F_1\left(
\b+i-\frac{ 1}{2},\,
i + \frac{1}{2};\,
\b+i + \frac{3}{2};\,
1-\frac{s^2}{\xi^2}
\right)\tyu {\frac{s^2}{\xi^2}}\uyt^{i}\\
& \hspace{5cm} \times \nu(i,\b,\a)\frac{H_{2\a-2i}(t)}{H_{2\a-2i}(0)}
\\
&=
\sum_{i=0}^{\a}
\nu'(i,\b,\a, \frac{s}{\xi})\frac{H_{2\a-2i}(t)}{H_{2\a-2i}(0)}.
\eega

\end{proof}
 
\subsection{Proof of \cref{prop:s0}: the case $s=0$}
In the case $s=0$, the spin condition \cref{eq:spinproperty}, entails that $f$ is constant on the fibers of $\pi\colon SO(3)\to S^2$, that is $f=\phi\circ \pi$ for a Gaussian function $\phi$ on $S^2$. Moreover, in such case, the left-invariance \cref{prop:inva} of $f$ translates into $\phi$ being an isotropic Gaussian field on $S^2$, that is, invariant in law under rotations. What happens to the level set in this case is clear:
$
f^{-1}(t)=\pi^{-1}\tyu \phi^{-1}(t)\uyt
$ is a union of fibers.
Thus, given that each fiber has length $2\pi$ and that $\pi$ is a Riemannian submersion, we can express the Area of a level of $f$ as the length of the level of $\phi$ via the identity
\be 
\m L_{f-t}(\pi^{-1}(D))=2\pi\cdot \m L_{\phi-t}(D).
\ee
One can thus obtain directly the following chaos expansion.
\begin{proof}[Proof of \cref{prop:s0}]
    A direct consequence of \cref{eq:nodalchaosAT} (from \cite{cgv2025StecconiTodino}) and \cref{def:sigma} is the following formula
    \begin{multline} 
\frac{\m L_{f-t}(\pi^{-1}(D))}{\xi}[q] \\ =
 \sum_{a,b\in \N,a+b
  =\!\frac{q}2}
e^{-\frac{t^2}{2}}\frac{H_{2a}(t)}{H_{2a}(0)}
\!\frac{\coeff(a,b)}{s_2} \int_D
2\pi\cdot
 H_{2a}(\phi(x))
 \Sigma_b\tyu \frac{\|\nabla_x \phi\|^2}{\xi^2}\uyt 
 dx.
    \end{multline}
    Now, recalling the definitions of the coefficient, we may write
    \bega 
\tyu \frac{H_{2a}(t)}{H_{2a}(0)} \!
\frac{\coeff(a,b)}{s_2}\uyt^{-1} \! \immondo_t(a,b,0)
&=\frac{s_2}{s_3} {_2F_1}\left( b-\frac{ 1}{2},\, \frac{1}{2};\, b+ \frac{3}{2};\, 1 \right)
\frac{B\left(\frac12,b+1\right) }{_2F_1(\a,0;\a+\frac12;1) }\\
&= \frac{4\pi}{2\pi^2} 
\frac{\Gamma\tyu b+\frac32\uyt\Gamma\tyu \frac32\uyt}{\Gamma\tyu 2\uyt\Gamma\tyu b+1\uyt}
\frac{
\frac{\Gamma\tyu\frac12\uyt \Gamma\tyu b+1\uyt}{\Gamma\tyu b+\frac32\uyt}
}
{
\frac{\Gamma\tyu \a+\frac12\uyt \Gamma\tyu \frac12\uyt}{\Gamma\tyu \frac12\uyt \Gamma\tyu \a+\frac12\uyt}
}
\\
&=
\frac{4\pi}{2\pi^2} \frac{\pi}{2}=1.
    \eega
The second identity follows from well known properties of the hypergeometric function, namely, its evaluation at $z=1$. This concludes the proof.
\end{proof}

\subsection{Proof of preliminary lemmas}\label{sec:auxProof}

\begin{proof}[Proof of \cref{thm:1}] Recall \cref{def:verHor} and \cref{prop:ortogonality}. 
Let us use the change of coordinates: $u=\sin \a \cdot h+\cos\a \cdot V(P)$, for $h\in S(\Ho(P))$
\bega 
\widetilde{H}_{2b}(d_Pf)=&
    \int_{S(T_PM)} 
 H_{2b}\tyu \frac{\langle d_Pf, u\rangle}{\|u\|_{g^f}}\uyt \|u\|_{g^f}du 
 \\
 =&
    \int_{0}^\pi\sin(\a)\int_{S(H_P)} 
 H_{2b}\tyu \frac{\sin\a \langle d_Pf, h\rangle+\cos\a \langle d_Pf, V(P)\rangle}{\sqrt{\xi^2\sin^2 \a+s^2\cos^2 \a}}\uyt \\
 & \hspace{5cm} \times \sqrt{\xi^2\sin^2 \a+s^2\cos^2 \a}\dd h \dd \a 
 \eega
 Let us introduce some notations.
 \be
s(\a):=\frac{\xi \sin\a}{\sqrt{\xi^2\sin^2 \a+s^2\cos^2 \a}}, \quad c(\a):=\frac{s\cos\a }{\sqrt{\xi^2\sin^2 \a+s^2\cos^2 \a}},
 \ee
 \be 
\nu(\a):=\sqrt{\xi^2-(\xi^2-s^2)\cos^2\a},
 \ee
 and
 \be 
\gamma_H(h):=\frac1\xi \langle d_Pf, h\rangle,
\quad 
\gamma_V:=\frac1s \langle d_Pf, V(P)\rangle.
 \ee
In particular, we have that \be \label{eq:dNabla}
\gamma_H(h)=\left\langle \frac{\nabla^H_Pf}{\xi} , h\right\rangle, \quad   \gamma_V=f\tyu PR_3\tyu \frac{\pi}{2s}\uyt\uyt.
\ee
From \cref{prop:ortogonality}, we deduce that $\gamma_H(h)$ and $\gamma_V$ are independent $\m N(0,1)$ variables, for any $h\in S(\Ho(P))$.
Substituting the new notations and applying Lemma~\ref{lem:sumprod}, we can proceed the computation as follows
{\scriptsize \bega 
\widetilde{H}_{2b}(d_Pf)
&=
\int_{0}^\pi\sin\a\int_{S(H_P)} 
 H_{2b}\tyu c(\a)\gamma_V+s(\a)\gamma_H(h)\uyt \nu(\a)\ \dd h \dd \a 
 \\
& =
\sum_{i+j=2b}\binom{2b}{i}  H_i\tyu \gamma_V\uyt  \tyu\int_{S(H_P)} 
 H_j\tyu \gamma_H(h)\uyt  \ \dd h \uyt\int_{0}^\pi\nu(\a)\sin(\a)
 c(\a)^is(\a)^j\dd \a\\
 & =
\sum_{i+j=b}s^{2i}\xi^{2j}\binom{2b}{2i}  H_{2i}\tyu \gamma_V\uyt  \tyu\int_{S(H_P)} 
  H_{2j}\tyu \gamma_H(h)\uyt  \ \dd h \uyt 
  \int_{0}^\pi\frac{
  \cos(\a)^{2i}\sin(\a)^{2j+1}}{\nu(\a)^{2b-1}}\dd \a, 
\eega} %
where the last equality follows from the fact that $j$ has to be even, for the corresponding term to be non-zero. Now, define the constant \be 
\m I_{i}^b\tyu\frac{s^2}{\xi^2}\uyt =\int_{0}^\pi\frac{
 \cos(\a)^{2i}\sin(\a)^{2(b-i)+1}}{\tyu1-\epsilon\cos^2\a\uyt^{\frac{2b-1}2}}\dd \a, \qquad  \text{for } \epsilon = (1-\frac{s^2}{\xi^2}),
\ee
which is computed in \cref{lem:Iib}. Then, recalling \cref{eq:dNabla}, we conclude that
{\scriptsize \bega
\widetilde{H}_{2b}(d_Pf)
 &=
\sum_{i+j=b}s^{2i}\xi^{2j}\binom{2b}{2i}  H_{2i}\tyu \gamma_V\uyt \Lag_j(\|\gamma_H\|^2)
\ \m I_{i}^b\frac{1}{\xi^{2b-1}}
\\
 & =
\xi \cdot \sum_{i+j=b}\tyu \frac s\xi\uyt ^{2i}\binom{2b}{2i}  H_{2i}\tyu \gamma_V\uyt \Lag_j\tyu \frac{\|\nabla^H_Pf\|^2}{\xi^2}\uyt
\ \m I_{i}^b\\
& = \xi \cdot \sum_{i+j=b}\binom{2b}{2i}\cdot\m I_{i}^b\!\tyu {\frac{s^2}{\xi^2}}\uyt\cdot \tyu {\frac{s^2}{\xi^2}}\uyt ^{i} \cdot  H_{2i}\tyu f\tyu PR_3\tyu \frac{\pi}{2s}\uyt\uyt\uyt \Lag_j\tyu \frac{\|\nabla^H_Pf\|^2}{\xi^2}\uyt.
\eega} %
\end{proof}
\begin{lemma}\label{lem:Iib}
    \be
\m I_{i}^b\tyu\frac{s^2}{\xi^2}\uyt
= B\left(i + \frac{1}{2},\, b - i + 1\right) \cdot
{}_2F_1\left(
b-\frac{ 1}{2},\,
i + \frac{1}{2};\,
b + \frac{3}{2};\,
1-\frac{s^2}{\xi^2}
\right)
\ee

\end{lemma}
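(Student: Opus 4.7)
The plan is to reduce the integral to the classical Euler integral representation of ${}_2F_1$. Let $\epsilon:=1-\tfrac{s^2}{\xi^2}\in[0,1)$ (the case $\epsilon=0$ is trivial since then the integrand has no singular factor).

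First I would exploit the symmetry of the integrand about $\alpha=\pi/2$. Since $\cos^{2i}(\pi-\alpha)=\cos^{2i}\alpha$, $\sin^{2(b-i)+1}(\pi-\alpha)=\sin^{2(b-i)+1}\alpha$, and $\cos^2\alpha$ is also symmetric, the integral over $[0,\pi]$ equals twice the integral over $[0,\pi/2]$:
\be
\m I_i^b\tyu\frac{s^2}{\xi^2}\uyt=2\int_0^{\pi/2}\frac{\cos^{2i}\alpha\,\sin^{2(b-i)+1}\alpha}{(1-\epsilon\cos^2\alpha)^{(2b-1)/2}}\,d\alpha.
\ee

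Next I would perform the substitution $u=\cos^2\alpha$ (so $du=-2\cos\alpha\sin\alpha\,d\alpha$ and, on $[0,\pi/2]$, $\cos\alpha=\sqrt{u}$, $\sin\alpha=\sqrt{1-u}$). Writing $\cos^{2i}\alpha=u^i$ and $\sin^{2(b-i)+1}\alpha=(1-u)^{b-i+1/2}$, after cancelling a factor $\sqrt{u(1-u)}$ from the Jacobian one obtains
\be
\m I_i^b\tyu\frac{s^2}{\xi^2}\uyt=\int_0^1 u^{i-1/2}(1-u)^{b-i}(1-\epsilon u)^{-(2b-1)/2}\,du.
\ee

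At this point I would invoke the Euler integral representation
\be
{}_2F_1(A,B;C;z)=\frac{\Gamma(C)}{\Gamma(B)\Gamma(C-B)}\int_0^1 t^{B-1}(1-t)^{C-B-1}(1-zt)^{-A}\,dt,
\ee
valid for $\Re C>\Re B>0$ and $|z|<1$. Matching exponents in the integrand forces $B=i+\tfrac12$, $C-B-1=b-i$ so $C=b+\tfrac32$, $A=b-\tfrac12$, and $z=\epsilon=1-\tfrac{s^2}{\xi^2}$. All positivity and convergence hypotheses are satisfied for $i\le b$ integers and $\tfrac{s^2}{\xi^2}\in(0,1]$. Reading off the $\Gamma$-prefactor and rewriting it as a Beta function via $B(x,y)=\Gamma(x)\Gamma(y)/\Gamma(x+y)$ yields the claimed identity.

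The only real care-points are the bookkeeping in the substitution (signs in $du$, correct fractional exponents in $\sin$ and $\cos$) and verifying that the parameters of ${}_2F_1$ are assigned consistently; both are routine once the substitution is set up. I would not expect any genuine obstacle.
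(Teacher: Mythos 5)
Your argument is correct and essentially identical to the paper's: the paper performs the substitutions $x=\cos\alpha$ followed by $t=x^2$ (equivalent to your single substitution $u=\cos^2\alpha$ after symmetrizing) to arrive at the same integral $\int_0^1 t^{i-\frac12}(1-t)^{b-i}(1-\epsilon t)^{-\frac{2b-1}{2}}\,dt$, and then invokes the same Euler/Gauss integral representation of ${}_2F_1$ with the same parameter matching. No gaps.
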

\begin{proof}
Changing variable $x=\cos\a$, and then $x^2=t$, the integral becomes
\be 
\m I_{i}^b=\int_{-1}^{1} \frac{x^{2i} (1 - x^2)^{b - i}}{\left(1 - \epsilon x^2\right)^{\frac{2b - 1}{2}}} \, dx
=
\int_{0}^{1} \frac{t^{i-\frac12} (1 - t)^{b - i}}{\left(1 - \epsilon t\right)^{\frac{2b - 1}{2}}} \, dt.
\ee
The latter fits in the classic formulation of Gauss' hypergeometric integral (see \cite[Equations (15.1.1), (15.3.1)]{Abramowitz}):
\be 
\int_{0}^1 t^{p-1} (1-t)^{q-1}(1-zt)^{-r}dt=B(p,q)\cdot {}_2{F}_1\tyu r,p ; p+q ; z\uyt,
\ee
where $B(p,q)=\frac{\Gamma(p)\Gamma(q)}{\Gamma(p+q)}$ denotes the Beta function, allowing us to conclude.
\end{proof}
\begin{proof}[Proof of \cref{thm:2}] To avoid ambiguities between the index of the Hermite polynomials, and the imaginary unit, we prove the equality for $\widetilde H_{2a,2m}(f,x)$.
\begin{align} 
\widetilde{H}_{2a,2m}(f,x) &=
\int_{0}^{2\pi}
 H_{2a}(f(P(x)R_3(\psi)))
H_{2m}\tyu f\tyu 
P(x)R_3\tyu \psi+ \frac{\pi}{2s}\uyt\uyt\uyt
\dd \psi
\\
&=
\int_{0}^{2\pi}
 H_{2a}\tyu \Re ( X e^{is\psi})\uyt 
H_{2m}\tyu \Re ( Xie^{is\psi})\uyt
\dd \psi
\\
&=
\int_{0}^{2\pi}
 H_{2a}\tyu \Re ( X e^{i\psi})\uyt 
H_{2m}\tyu \Re ( Xie^{i\psi})\uyt
\dd \psi,
\end{align}
(for the last identity observe that the integrand is $\frac{2\pi}{s}$-periodic and use the change of variables $\psi\mapsto s\psi$).
This random variable is in the chaos of order $2m+2a$, with respect to $X$ and it is invariant under rotations of $X$, so it must depend only on $|X|$. Moreover, since the even Hermite polynomials contain only even degree terms, we deduce that the above is a polynomial in the variable $|X|^2$. Following on this argument (see also \cref{prop:ortogchi}), we can conclude that there exists a constant $c_{a,m}$ such that
\be 
\widetilde{H}_{2a,2m}(f,x) = c_{a,m}\int_{0}^{2\pi} H_{2a+2m}\tyu \langle X,u \rangle_{\R^2} \uyt \dd u,
\ee
which we can write in complex notation as
\begin{align}
\widetilde{H}_{2a,2m}&(f,x) 
 =c_{a,m}\int_{0}^{2\pi} H_{2a+2m}\tyu \Re\tyu X e^{i\psi}\uyt\uyt d\psi
\\
& =
c_{a,m}\int_{0}^{2\pi} H_{2a+2m}\tyu f(P(x))\cos\psi +f\tyu P(x)R_3\tyu\frac{\pi}{2s}\uyt\uyt \sin(\psi)\uyt d\psi.
\end{align}
The constant $c_{a,m}$ is computed in \cref{lem:cam}, allowing us to conclude.
\end{proof}
\begin{lemma}\label{lem:cam}
    \be 
c_{a,m}=
 \frac{\Gamma\tyu a+\frac12\uyt\Gamma\tyu m+\frac12\uyt}{\Gamma\tyu a+m+\frac12\uyt\sqrt{\pi}}
 =\ \frac{1}{_2F_1(a,m;a+m+\frac12;1)}
    \ee
\end{lemma}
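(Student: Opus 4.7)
The plan is to pin down $c_{a,m}$ by matching leading coefficients in the identity already established inside the proof of \cref{thm:2}. Writing $X=re^{i\theta}$ and shifting the integration variable by $\theta$ (legal by $2\pi$-periodicity), that identity becomes
\[
\int_0^{2\pi} H_{2a}(r\cos\phi)\,H_{2m}(r\sin\phi)\,d\phi \;=\; c_{a,m}\int_0^{2\pi} H_{2a+2m}(r\cos\phi)\,d\phi.
\]
Both sides are polynomials in $r^2$ of degree exactly $a+m$, so matching the coefficient of $r^{2(a+m)}$ determines $c_{a,m}$ uniquely. Note that the substantive work of identifying the two sides \emph{up to a scalar} was already done via the chaos/rotation-invariance argument combined with \cref{prop:ortogchi}; only the scalar remains to be computed.

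Because the probabilist Hermite polynomials defined by \eqref{eq:H} are monic, the coefficient of $r^{2(a+m)}$ on the left is $\int_0^{2\pi}\cos^{2a}\phi\,\sin^{2m}\phi\,d\phi$ and the one on the right is $c_{a,m}\int_0^{2\pi}\cos^{2(a+m)}\phi\,d\phi$. Using the standard Beta-integral evaluation
\[
\int_0^{2\pi}\cos^{2p}\phi\,\sin^{2q}\phi\,d\phi \;=\; 2\,B\!\left(p+\tfrac12,\,q+\tfrac12\right),
\]
together with $B(p,q)=\Gamma(p)\Gamma(q)/\Gamma(p+q)$, the ratio of these two integrals simplifies (after cancellation of the common $\Gamma(a+m+1)$) to
\[
c_{a,m} \;=\; \frac{\Gamma(a+\tfrac12)\,\Gamma(m+\tfrac12)}{\Gamma(a+m+\tfrac12)\,\sqrt{\pi}},
\]
yielding the first expression in the statement.

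For the second equality, I would invoke the Gauss summation formula \eqref{eq:Fs0} applied with parameters $a$, $m$, $c=a+m+\tfrac12$; the admissibility condition $c-a-m=\tfrac12>0$ holds, and a direct substitution gives
\[
{}_2F_1\!\left(a,m;\,a+m+\tfrac12;\,1\right) \;=\; \frac{\Gamma(a+m+\tfrac12)\,\Gamma(\tfrac12)}{\Gamma(m+\tfrac12)\,\Gamma(a+\tfrac12)},
\]
which is visibly the reciprocal of the $c_{a,m}$ just computed, closing the chain.

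I do not anticipate any real obstacle: once the leading-coefficient matching is set up, everything reduces to elementary manipulation of Beta and Gamma identities. The only small care needed is in confirming the polynomial structure (both sides really are polynomials in $r^2$ of the same degree $a+m$), which is immediate from the evenness of even-index Hermite polynomials and rotational invariance in $X$.
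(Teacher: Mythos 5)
Your proof is correct, but it computes the constant by a genuinely different route than the paper. The paper determines $c_{a,m}$ as the ratio $E_1/E_2$ of a covariance to a variance (the standard projection coefficient onto the one-dimensional span of $\Sigma_{a+m}(|X|^2)$ inside the chaos of order $2a+2m$): it expands $E_1$ and $E_2$ using rotational invariance of the law of $X$, the Hermite addition formula \cref{lem:sumprod}, and Hermite orthogonality, landing on the same ratio of trigonometric integrals $\int_0^{2\pi}\cos^{2a}\psi\sin^{2m}\psi\,d\psi \big/ \int_0^{2\pi}\cos^{2(a+m)}\psi\,d\psi$ that you obtain. You instead exploit that the identity established in the proof of \cref{thm:2} is, after conditioning on $|X|=r$, an identity between two polynomials in $r^2$ of degree exactly $a+m$, and you read off $c_{a,m}$ from the leading coefficients using only the monicity of the probabilist Hermite polynomials. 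This is more elementary (no expectations, no \cref{lem:sumprod}, no orthogonality relations), at the price of one small point you should make explicit: the chaos argument a priori gives equality of the two random variables only almost surely, so to match coefficients you need to upgrade it to a pointwise polynomial identity in $r$; this is immediate because $|X|$ has an absolutely continuous law with full support on $(0,\infty)$ and two polynomials agreeing on a set of positive Lebesgue measure coincide. The final Beta/Gamma manipulation and the application of Gauss's formula \eqref{eq:Fs0} are identical to the paper's.
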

\begin{proof} Recall that
{\small \begin{align}
& c_{a,m}=\frac{E_1}{E_2}\\
& =\frac{
\E\kop \int_{0}^{2\pi}
 H_{2a}\tyu \Re ( X e^{i\psi})\uyt 
H_{2m}\tyu \Re ( Xie^{i\psi})\uyt
\dd \psi 
\cdot \int_{0}^{2\pi} H_{2a+2m}\tyu \Re\tyu X e^{i\psi'}\uyt\uyt d\psi'\pok
} 
{
\Var \kop \int_{0}^{2\pi} H_{2a+2m}\tyu \Re\tyu X e^{i\psi'}\uyt\uyt d\psi'\pok}.
\end{align}} %
First, we analyze the numerator:
{\small \begin{align} 
& E_1 = \\
&=\E\kop \int_{0}^{2\pi}
 H_{2a}\tyu \Re ( X e^{i\psi})\uyt 
H_{2m}\tyu \Re ( Xie^{i\psi})\uyt
\dd \psi 
\cdot \! \int_{0}^{2\pi} H_{2a+2m}\tyu \Re\tyu X e^{i\psi'}\uyt\uyt d\psi'\pok 
\\
&=
\int_{0}^{2\pi}\dd \psi\int_{0}^{2\pi}\dd \psi'\E\kop 
 H_{2a}\tyu \Re ( X e^{i\psi})\uyt 
H_{2m}\tyu \Re ( Xie^{i\psi})\uyt
 H_{2a+2m}\tyu \Re\tyu X e^{i\psi'}\uyt\uyt\pok
\\
&=
2\pi\int_{0}^{2\pi}\dd \psi\E\kop 
 H_{2a}\tyu \Re ( X)\uyt 
H_{2m}\tyu \Re ( Xi)\uyt
 H_{2a+2m}\tyu \Re\tyu X e^{i\psi}\uyt\uyt\pok
 \\
&=
2\pi\int_{0}^{2\pi}\E\kop 
 H_{2a}\tyu \gamma\uyt 
H_{2m}\tyu \gamma'\uyt
 H_{2a+2m}\tyu  \gamma  \cos\psi+\gamma' \sin\psi \uyt\pok
 \dd \psi\overset{\cref{lem:sumprod}}{=}
 \\
 & = 
 2\pi\int_{0}^{2\pi}\E\kop 
 H_{2a}\tyu \gamma\uyt 
H_{2m}\tyu \gamma'\uyt
 \binom{2a+2m}{2m} \cos^{2a}\psi \sin^{2m}\psi \cdot H_{2a}(\gamma)
H_{2m}\tyu \gamma' \uyt
 \pok
 \dd \psi
 \\
&=
2\pi (2a+2m)!\int_{0}^{2\pi}\cos^{2a}\psi \sin^{2m}\psi 
 \dd \psi,
\end{align} } %
where $\gamma,\gamma'\sim \m N(0,1)$ are independent. In the third identity we use the fact that $X$ and $Xe^{i\psi}$ have the same law, and then redefine $\psi$ as $\psi'-\psi$.
Now, we analyze the denominator:
\bega 
E_2&=
\int_{0}^{2\pi}\dd \psi\int_{0}^{2\pi}\dd \psi'\E\kop 
 H_{2a+2m}\tyu \Re\tyu X e^{i\psi}\uyt\uyt
 \cdot
 H_{2a+2m}\tyu \Re\tyu X e^{i\psi'}\uyt\uyt\pok
 \\
 &=
 2\pi\int_{0}^{2\pi}\dd \psi\E\kop 
 H_{2a+2m}\tyu \Re\tyu X\uyt\uyt
 \cdot
 H_{2a+2m}\tyu \Re\tyu X e^{i\psi}\uyt\uyt\pok
 \\
 &=
2\pi(2a+2m)!\int_{0}^{2\pi}(\cos\psi)^{2a+2m}\dd \psi.
\eega
To conclude,
\be 
c_{a,m}=\frac{\int_{0}^{2\pi}\cos^{2a}\psi \sin^{2m}\psi 
 \dd \psi}{\int_{0}^{2\pi}(\cos\psi)^{2a+2m}\dd \psi}=
 \frac{B\tyu a+\frac12, m+\frac12 \uyt}{B(a+m+\frac12,\frac12)}
 =
 \frac{\Gamma\tyu a+\frac12\uyt\Gamma\tyu m+\frac12\uyt}{\Gamma\tyu a+m+\frac12\uyt\sqrt{\pi}}.
    \ee
    We conclude using \cref{eq:Fs0}.
\end{proof}

\begin{appendix} 
\section{Gaussian Formulas}\label{appendix}
\begin{lemma}\label{lem:sumprod}
Let $c^2+s^2=1$ and $x,y\in\R$. Then,
    \be 
H_{q}\tyu cx+sy \uyt=\sum_{i+j=q}H_i(x)H_j(y)\binom{q}{i} c^is^j.
\ee
\end{lemma}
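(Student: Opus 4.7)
The plan is to derive the identity directly from the exponential generating function of the Hermite polynomials, namely
\[
\sum_{q\in\N} H_q(z)\,\frac{t^q}{q!} \;=\; e^{tz-\frac{t^2}{2}},
\]
which is the defining relation \eqref{eq:H}. The key algebraic observation is that the hypothesis $c^2+s^2=1$ lets us split the quadratic term in the exponent as $t^2/2 = (tc)^2/2 + (ts)^2/2$, so that the generating function factorizes.

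First I would substitute $z=cx+sy$ into the generating function to obtain
\[
\sum_{q\in\N} H_q(cx+sy)\,\frac{t^q}{q!} \;=\; e^{(tc)x-\frac{(tc)^2}{2}}\cdot e^{(ts)y-\frac{(ts)^2}{2}}.
\]
Next, I would apply the generating function \emph{in reverse} to each factor, with formal parameters $tc$ and $ts$ respectively, obtaining
\[
\sum_{q\in\N} H_q(cx+sy)\,\frac{t^q}{q!}
\;=\;\left(\sum_{i\in\N} H_i(x)\,\frac{(tc)^i}{i!}\right)\left(\sum_{j\in\N} H_j(y)\,\frac{(ts)^j}{j!}\right).
\]
The right-hand side is a Cauchy product of two absolutely convergent power series in $t$, so I can rearrange it as
\[
\sum_{q\in\N} t^q\sum_{i+j=q}\frac{H_i(x)\,H_j(y)\,c^i s^j}{i!\,j!}.
\]
Finally, I would identify coefficients of $t^q$ on both sides, multiply through by $q!$, and use the identity $\binom{q}{i}=\frac{q!}{i!\,j!}$ when $i+j=q$, which yields the claimed formula.

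There is no substantial obstacle: the only thing to be careful about is the legitimacy of the rearrangement, which is justified by absolute convergence of the exponential series for every $t\in\R$ (equivalently, by treating the identity formally at the level of power series, since both sides are polynomial identities in $x,y$ once the coefficient of each $t^q$ is extracted).
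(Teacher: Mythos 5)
Your proposal is correct and follows exactly the paper's own argument: substituting into the generating function \eqref{eq:H}, using $c^2+s^2=1$ to split the exponent and factorize, expanding each factor, and comparing coefficients of $t^q$. Nothing to add.
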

\begin{proof}
We present an three-line proof based on the generating function, see \cref{eq:H}. 
\bega
\sum_{q\in \N}H_q\tyu cx+sy \uyt\frac{t^q}{q!} &=e^{t\tyu cx+sy \uyt-\frac{t^2}{2}}
=
e^{t\tyu cx+sy \uyt-\frac{t^2(c^2+s^2)}{2}}
\\
&=
e^{(tc)x-\frac{(tc)^2}{2}}
\cdot
e^{(ts)y-\frac{(ts)^2}{2}}
=
\sum_{i,j\in \N}H_i(x)H_j(y)\frac{(ct)^i}{i!}\frac{(st)^j}{j!}
\\
&=\sum_{q\in\N}\sum_{i+j=q}H_i(x)H_j(y)\frac{c^i}{i!}\frac{s^j}{j!} t^q.
\eega
\end{proof}

The following result reformulates \cref{thm:1} in full generality. 

\begin{theorem} Let us consider a Gaussian vector $\eta \sim \mathcal N(0,C)$, where \begin{equation}
    C=\begin{pmatrix}
        \xi^2 \1_{n-1} & 0 \\ 0 & s^2
    \end{pmatrix}.
\end{equation} 
Fix $b\in\N$. The random variable $F(\eta)$, defined as
\be 
F(\eta):=
    \int_{S^{n-1}} 
 H_{2b}\tyu \frac{\langle \eta, u\rangle}{\|C^{1/2} u\| }\uyt \|C^{1/2} u\| du ,
 \ee
depends only on $\|\eta_H\|$ and $\eta_V$, where $\eta_H = (\eta_1,\ldots,\eta_{n-1})\sim\mathcal N(0,\xi^2 \1_{n-1})$, and $\eta_V=\eta_n\sim\mathcal N(0,s^2)$. In particular, it takes the following expression:
\be 
F(\eta)=\xi \cdot \sum_{i+j=b}\binom{2b}{2i}\cdot\m I_{i}^b\!\tyu {\frac{s^2}{\xi^2}}\uyt\cdot \tyu {\frac{s^2}{\xi^2}}\uyt ^{i} \cdot  H_{2i}\tyu \frac{\eta_V}{s}\uyt \Lag_{n-1,j}\tyu \frac{\|\eta_H\|^2}{\xi^2}\uyt,
\ee
where $\Sigma_j$ are as in \eqref{eq:lag}.
\end{theorem}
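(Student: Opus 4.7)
The statement is a direct generalization of Lemma~\ref{thm:1}, where the geometric splitting $T_PM = H(P) \oplus \R V(P)$ is replaced by the abstract block decomposition $\R^n = \R^{n-1}\oplus \R$ induced by the block-diagonal covariance $C$. My plan is therefore to transpose the proof of Lemma~\ref{thm:1} line by line, paying attention to the dimensional factors that appear in the spherical change of variables.

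First, I would parametrize $S^{n-1}$ by writing $u = \sin(\alpha) h + \cos(\alpha) e_n$ with $h \in S^{n-2}$ and $\alpha \in [0,\pi]$, so that the surface measure decomposes as $du = \sin^{n-2}(\alpha)\, dh\, d\alpha$. A direct computation then gives
\begin{equation*}
\langle \eta,u\rangle = \sin(\alpha)\,\langle \eta_H,h\rangle + \cos(\alpha)\,\eta_V, \qquad \|C^{1/2}u\|^2 = \xi^2 \sin^2(\alpha) + s^2 \cos^2(\alpha) =: \nu(\alpha)^2.
\end{equation*}
Introducing the normalized variables $\gamma_H(h) := \langle \eta_H,h\rangle/\xi$ and $\gamma_V := \eta_V/s$, and the functions $\sigma(\alpha) := \xi\sin(\alpha)/\nu(\alpha)$ and $c(\alpha) := s\cos(\alpha)/\nu(\alpha)$ (which satisfy $c^2+\sigma^2=1$), the integrand becomes $H_{2b}\bigl(c(\alpha)\gamma_V + \sigma(\alpha)\gamma_H(h)\bigr)\,\nu(\alpha)$.

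Second, I would apply Lemma~\ref{lem:sumprod} to expand $H_{2b}(c\gamma_V+\sigma\gamma_H)$ as a sum over $i+j=2b$ of $\binom{2b}{i}H_i(\gamma_V)H_j(\gamma_H(h))\,c^i\sigma^j$. Since the block-diagonal form of $C$ implies that $\gamma_H(h)$ and $\gamma_V$ are independent standard Gaussians for every fixed $h$, parity of Hermite polynomials together with the symmetry of the uniform measure on $S^{n-2}$ kill the terms with odd $j$, so only $i=2i'$, $j=2j'$ with $i'+j'=b$ survive. The $h$-integral then returns exactly $\Sigma_{n-1,j'}\!\bigl(\|\eta_H\|^2/\xi^2\bigr)$ by Definition~\ref{def:sigma}, and the powers of $c,\sigma$ combine with $\nu$ into
\begin{equation*}
\nu\cdot c^{2i'}\sigma^{2j'} = \frac{s^{2i'}\xi^{2j'}\cos^{2i'}(\alpha)\sin^{2j'}(\alpha)}{\nu(\alpha)^{2b-1}}.
\end{equation*}

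Third, the remaining $\alpha$-integral
\begin{equation*}
\int_0^{\pi}\frac{\cos^{2i'}(\alpha)\,\sin^{2j'}(\alpha)\,\sin^{n-2}(\alpha)}{\nu(\alpha)^{2b-1}}\,d\alpha
\end{equation*}
can be handled exactly as in Lemma~\ref{lem:Iib}: factor out $\xi^{2b-1}$ from $\nu^{2b-1}$ to expose the factor $(1-\epsilon\cos^2\alpha)^{-(2b-1)/2}$ with $\epsilon=1-s^2/\xi^2$, change variables $x=\cos(\alpha)$ and $t=x^2$, and recognize the Gauss hypergeometric integral, which produces the coefficient $\mathcal{I}_{i'}^b(s^2/\xi^2)$ together with the powers of $\xi$ and $s^2/\xi^2$ predicted by the statement. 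Reassembling and writing the final sum with the index $i'=i$, $j'=j$ yields the claimed expansion.

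The only step that requires any genuine care is the $\alpha$-integral, since for $n\neq 3$ one picks up the extra factor $\sin^{n-2}(\alpha)$; however, this modifies only the exponents inside the integrand and the same Gauss integral identity used in Lemma~\ref{lem:Iib} delivers the coefficient in the same closed form. Once this computation is done, independence of $\gamma_H$ from $\gamma_V$ and the defining formula of $\Sigma_{n-1,j}$ make the identification with the stated right-hand side automatic, so I expect no conceptual obstacle beyond the careful bookkeeping of dimensional powers.
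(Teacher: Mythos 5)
Your strategy is exactly the one the paper itself relies on: the appendix theorem is stated without a separate proof, as a reformulation of \cref{thm:1}, and transposing that proof to the block decomposition $\R^n=\R^{n-1}\oplus\R$ is the intended route. Your first two steps are correct: the parametrization $u=\sin(\alpha)h+\cos(\alpha)e_n$ with $du=\sin^{n-2}(\alpha)\,dh\,d\alpha$, the identification of $\gamma_H(h)$ and $\gamma_V$ as standard Gaussians, the parity argument killing the odd-$j$ terms, the appearance of $\Sigma_{n-1,j}$ from the $h$-integral, and the bookkeeping producing the overall factor $\xi\cdot(s^2/\xi^2)^i$ all go through verbatim.

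The gap is in your third step, precisely at the point you flag as needing ``genuine care'' and then wave through. The extra factor $\sin^{n-2}(\alpha)$ does not merely modify exponents while leaving the coefficient ``in the same closed form''; it changes the closed form. Carrying out the substitutions $x=\cos\alpha$, $t=x^2$ on
\[
\int_0^{\pi}\frac{\cos^{2i}(\alpha)\,\sin^{2j+n-2}(\alpha)}{\tyu 1-\epsilon\cos^2\alpha\uyt^{\frac{2b-1}{2}}}\,d\alpha
=\int_0^1 \frac{t^{i-\frac12}(1-t)^{j+\frac{n-3}{2}}}{(1-\epsilon t)^{b-\frac12}}\,dt
\]
and applying the Gauss hypergeometric integral yields
\[
B\tyu i+\tfrac12,\ b-i+\tfrac{n-1}{2}\uyt\cdot{}_2F_1\tyu b-\tfrac12,\ i+\tfrac12;\ b+\tfrac{n}{2};\ 1-\tfrac{s^2}{\xi^2}\uyt,
\]
which coincides with $\m I^b_i$ of \cref{eq:IBF} only when $n=3$. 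So either the coefficient in the statement must be read as the $n$-dependent quantity above (in which case the statement as written, which points to \cref{eq:IBF}, is imprecise and your proof should say so), or your derivation does not close. Since the only genuinely new content of this theorem relative to \cref{thm:1} is exactly this $n$-dependence, this is the one computation that had to be done explicitly rather than asserted.
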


The following result reformulates \cref{thm:2} in full generality, using three different formulations.

\begin{theorem}

\textbf{(As a complex rv)} Let $\zeta \sim \mathcal N_{\mathbb C}(0,2)$ be a (symmetric) complex Gaussian variable such that $\Re(\zeta)$, $\Im(\zeta)$ are independent standard (real) Gaussian variable (see \cite[Proposition 1.31]{janson}). Fix $a,b\in\N$. Then, \begin{equation}\label{eq:complexForm}
    \int_{0}^{2\pi}
 H_{2a}\tyu \Re ( \zeta e^{i\psi})\uyt 
H_{2b}\tyu \Re ( \zeta ie^{i\psi})\uyt
\dd \psi 
=\frac{\Lag_{a+b}\tyu 
|\zeta |^2 \uyt}{_2F_1(a,b;a+b+\frac12;1)}
,
\end{equation}
where $|z|=\sqrt{z\bar z}$ denotes the norm of a complex number $z$, ${}_2F_1$ is the hypergeometric function, see \cref{def:2F1}, and $\Sigma$ as in \eqref{eq:lag}.

\textbf{(As a random vector)} An equivalent formulation comes from identifying $\C \cong \R^2$. Let $\eta = \binom{\gamma_1 \cos\varphi}{\gamma_2\sin\varphi}$ be a Gaussian vector, where $\gamma_1$, $\gamma_2\sim\mathcal N_\R(0,1)$ are independent. Let $R
$ be a rotation of $\frac\pi2$. Then, $R\eta = \binom{-\gamma_2\sin\varphi}{\gamma_1 \cos\varphi}$, and \begin{equation}\label{eq:vectorForm}
    \int_{S^1}
 H_{2a}\tyu \langle \eta, v \rangle \uyt 
H_{2b}\tyu  \langle R\eta, v \rangle \uyt
\dd v 
=\frac{\Lag_{a+b}\tyu 
|\eta |^2 \uyt}{_2F_1(a,b;a+b+\frac12;1)},
\end{equation}
where $|z|=\sqrt{z_1^2 + z_2^2}$ denotes the norm of a vector $z\in\R^2$, ${}_2F_1$ is the hypergeometric function, see \cref{def:2F1}, and $\Sigma$ as in \eqref{eq:lag}.

\textbf{(Explicit)} A third equivalent formulation is the following. Let us consider independent $\gamma_1$, $\gamma_2\sim\mathcal N(0,1)$. Then, \begin{multline}\label{eq:explicitForm}
    \int_{0}^{2\pi}
 H_{2a}\tyu \gamma_1 \cos\psi - \gamma_2 \sin\psi \uyt 
H_{2b}\tyu \gamma_1 \sin\psi + \gamma_2 \cos\psi \uyt
\dd \psi \\
=\frac{\Lag_{a+b}\tyu 
\gamma_1^2 + \gamma_2^2  \uyt}{_2F_1(a,b;a+b+\frac12;1)}
,
\end{multline}
where ${}_2F_1$ is the hypergeometric function, see \cref{def:2F1}, and $\Sigma$ as in \eqref{eq:lag}. 
\end{theorem}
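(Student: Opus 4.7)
The plan is to first reduce the three formulations to a single identity, and then mimic the argument already used for \cref{thm:2}, with the constant being computed as in \cref{lem:cam}.

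First I would verify that the three statements are literally equivalent. Writing $\zeta = \gamma_1 + i\gamma_2$ with $\gamma_1,\gamma_2 \sim \mathcal N_\R(0,1)$ independent, one checks that $\Re(\zeta e^{i\psi}) = \gamma_1 \cos\psi - \gamma_2\sin\psi$ and $\Re(i \zeta e^{i\psi}) = -\Im(\zeta e^{i\psi}) = -\gamma_1\sin\psi - \gamma_2\cos\psi$ (up to sign conventions which do not affect the even Hermite polynomials). Under the identification $\mathbb C \cong \mathbb R^2$ with $v = (\cos\psi,\sin\psi)\in S^1$, this is exactly the vector formulation \eqref{eq:vectorForm}, and the third formulation \eqref{eq:explicitForm} is obtained by substituting the explicit parameterization. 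Since the three left-hand sides (and the right-hand sides) coincide, it suffices to prove the complex form \eqref{eq:complexForm}.

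Next, I would run the same chaos argument used in \cref{thm:2}. The random variable
\[
F(\zeta) := \int_0^{2\pi} H_{2a}\bigl(\Re(\zeta e^{i\psi})\bigr) H_{2b}\bigl(\Re(\zeta i e^{i\psi})\bigr)\, d\psi
\]
lies in the chaos of order $2a+2b$ with respect to $\zeta$. Moreover, since $\zeta$ and $e^{i\alpha}\zeta$ have the same law for every $\alpha\in\R$, a change of variables $\psi\mapsto\psi-\alpha$ inside the integral shows that $F(\zeta) \overset{d}{=} F(e^{i\alpha}\zeta)$; in fact the equality is pointwise, hence $F(\zeta)$ depends only on $|\zeta|$. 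Because only even Hermite polynomials appear, $F$ is a polynomial in $|\zeta|^2$. By \cref{prop:ortogchi} (with $n=2$), any $L^2$ function of $|\zeta|$ admits a unique chaos expansion in the family $\{\Sigma_k(|\zeta|^2)\}_{k\in\N}$, and the $2(a+b)$-th chaos component is a scalar multiple of $\Sigma_{a+b}(|\zeta|^2)$. Therefore there exists a constant $c_{a,b}$ such that
\[
F(\zeta) = c_{a,b}\, \Sigma_{a+b}(|\zeta|^2) = c_{a,b}\int_0^{2\pi} H_{2a+2b}\bigl(\Re(\zeta e^{i\psi})\bigr)\, d\psi.
\]

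The remaining step is to compute $c_{a,b}$. Here I would repeat verbatim the calculation in \cref{lem:cam}: multiplying both sides by $\int_0^{2\pi} H_{2a+2b}(\Re(\zeta e^{i\psi'}))\,d\psi'$ and taking expectations, the denominator reduces to $2\pi(2a+2b)!\int_0^{2\pi}\cos^{2a+2b}\psi\,d\psi$ by rotation invariance, while the numerator becomes $2\pi(2a+2b)!\int_0^{2\pi}\cos^{2a}\psi\sin^{2b}\psi\,d\psi$ after applying \cref{lem:sumprod} to expand $H_{2a+2b}(\gamma_1\cos\psi+\gamma_2\sin\psi)$ and using orthogonality of Hermite polynomials. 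This yields
\[
c_{a,b} = \frac{B(a+\tfrac12, b+\tfrac12)}{B(a+b+\tfrac12,\tfrac12)} = \frac{\Gamma(a+\tfrac12)\Gamma(b+\tfrac12)}{\Gamma(a+b+\tfrac12)\sqrt\pi},
\]
and applying \cref{eq:Fs0} to $_2F_1(a,b;a+b+\tfrac12;1)$ rewrites this quotient of Gamma functions exactly as $1/{}_2F_1(a,b;a+b+\tfrac12;1)$.

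I do not expect any serious obstacle: the only delicate point is making sure the chaos/rotation argument is watertight when phrased for a complex Gaussian (as opposed to a real two-dimensional one), but the identification $\zeta\leftrightarrow(\gamma_1,\gamma_2)$ handled in the first step makes this transparent. The arithmetic of $\Gamma$-functions in the last step is the only place where a bookkeeping slip could occur, but it is identical to the one in \cref{lem:cam}.
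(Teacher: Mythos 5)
Your proposal is correct and follows essentially the same route as the paper: the appendix theorem is a restatement of \cref{thm:2}, and your argument (reduce the three formulations to one via $\zeta=\gamma_1+i\gamma_2$, use rotation invariance and the chaos order $2(a+b)$ together with \cref{prop:ortogchi} to get $F=c_{a,b}\Sigma_{a+b}(|\zeta|^2)$, then compute $c_{a,b}$ as a ratio of Beta functions and invoke \cref{eq:Fs0}) is exactly the proof of \cref{thm:2} combined with \cref{lem:cam}. No gaps.
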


\end{appendix}

\subsubsection*{Acknowledgments.} The authors acknowledge support from Workshop INdAM - 2024 ``Analysis and Geometry of Random Fields'' (CUP E53C23000950001). Michele Stecconi acknowledges support from the Luxembourg National Research Fund (Grant: 021/16236290/HDSA). FP’s research is partially supported by the Luxembourg National Research Fund (Grant: O22/17372844/FraMStA).
\bibliographystyle{abbrv}
\bibliography{spin_chaos.bib}
\end{document}